\documentclass{article}
\usepackage[utf8]{inputenc}
\usepackage{algorithm}
\usepackage{algorithmic}
\usepackage{amsmath}
\usepackage{amssymb}
\usepackage{amsthm}
\usepackage{tikz-cd}
\usepackage[textsize=scriptsize]{todonotes}
\usepackage{xcolor}
\usepackage{tikz}
\usepackage{enumitem}
\usetikzlibrary{cd, calc, shapes, decorations.shapes, decorations.markings, decorations.pathreplacing, shapes.geometric, shapes.symbols,positioning}
\usepackage{pgfplots}
\usepackage{float}
\usepackage{url}
\usepackage[normalem]{ulem} 
\usepackage{appendix}

\newtheorem{theorem}{Theorem}[section]
\newtheorem{prop}[theorem]{Proposition}
\newtheorem{proposition}[theorem]{Proposition}
\newtheorem{lemma}[theorem]{Lemma}

\theoremstyle{definition}
\newtheorem{definition}[theorem]{Definition}
\newtheorem{remark}[theorem]{Remark}
\newtheorem*{remark*}{Remark}

\newtheorem{example}[theorem]{Example}

\newcommand{\closure}[1]{\overline{#1}}
\newcommand{\lub}{\mathrm{l.u.b.}}
\newcommand{\ZZ}{\mathbb{Z}}
\newcommand{\RR}{\mathbb{R}}
\newcommand{\K}{\mathbb{K}}
\newcommand{\V}{\mathbb{V}}
\newcommand{\VV}{\mathcal{V}}

\newcommand{\KK}{\mathcal{K}}
\newcommand{\rank}{\rho}

\tikzset{twosimp/.style={fill opacity=0.6,fill=gray,draw opacity=0.9}}

\DeclareMathOperator{\vect}{Vect}
\DeclareMathOperator{\SC}{SC}
\DeclareMathOperator{\h}{H}
\DeclareMathOperator{\push}{push}

\title{Morse-based Fibering of the Persistence Rank Invariant}

\author{
  Asilata Bapat\footnote{Australian National University, Canberra, Australia}\\
  \texttt{asilata.bapat@anu.edu.au}
  \and Robyn Brooks \footnote{Boston College, Massachusetts, United States}\\
  \texttt{robyn.brooks@bc.edu} \\
  \and Celia  Hacker\footnote{EPFL, Lausanne, Switzerland} \\
  \texttt{celia.hacker@epfl.ch}
  \and Claudia Landi\footnote{ Università di Modena e Reggio Emilia, DISMI, Italy} \\ \texttt{claudia.landi@unimore.it} 
  \and Barbara I. Mahler \footnote{University of Oxford, United Kingdom}
  \\ \texttt{mahler@maths.ox.ac.uk} 
}

\begin{document}

\maketitle

\vspace{-0.3 cm}

\begin{abstract}
Although there is no doubt that multi-parameter persistent homology is a useful tool to analyse multi-variate data, efficient ways to compute these modules are still lacking in the available topological data analysis toolboxes. Other issues such as interpretation and visualization of the output remain difficult to solve. Software visualizing multi-parameter persistence diagrams is currently only available for $2$-dimensional persistence modules. One of the simplest invariants for a multi-parameter persistence module is its rank invariant, defined as the function that counts the number of linearly independent homology classes that live in the filtration through a given pair of values of the multi-parameter. We propose a step towards interpretation and visualization of the rank invariant for persistence modules for any given number of parameters. We show how discrete Morse theory may be used to compute the rank invariant, proving that it is completely determined by its values at points whose coordinates are critical with respect to a discrete Morse gradient vector field. These critical points partition the set of all lines of positive slope in the parameter space into equivalence classes, such that the rank invariant along lines in the same class are also equivalent. We show that we can deduce all persistence diagrams of the restrictions to the lines in a given class from the persistence diagram of the restriction to a representative 
in that class\footnote{
{\em MSC: 55N31, 57Q70}, {\em Keywords:} persistence module, persistence diagram, discrete gradient vector field, critical value
}.
\end{abstract}

\section*{Introduction}
Digital data are being produced at a constantly increasing pace, and their availability is changing the approach to science and technology. The fundamental hypothesis of Topological Data Analysis is that data come as samples taken from an underlying shape, and unveiling such shape is important for understanding the studied phenomenon.  Topological shape analysis amounts to determining non-trivial topological holes in any dimension. Computational Topology provides tools to derive specific signatures -- topological invariants -- which depend only on topological features of the shape of data and are robust to local noise \cite{Cohen-Steiner2007}. Among them, {\em persistent homology} \cite{Frosini92,Barannikov94,Robins2000,Edelsbrunner2002,Zomordian-Carlsson2005} stands out as most useful, having already found numerous applications in a diverse range of fields \cite{bendich2016persistent,bhattacharya2015persistent,green2019topology,lee2017quantifying,sinhuber2017persistent}. 
  
  The first step in the persistence pipeline is to build a family, called a filtration, of nested simplicial complexes that model the data at various scales by varying one or more parameters. The second step focuses on the maps induced in homology by the simplicial inclusions to extract invariants such as the {\em persistence module}. The third step is to use persistence invariants as a source of feature vectors in machine learning contexts, the final goal being to use the acquired topological information to improve the understanding of the underlying data. An important feature of this pipeline is its robustness to noise in the input data \cite{Cohen-Steiner2007}. 

 Some systems warrant analysis across multiple parameters, so it is important to focus on  multi-parameter persistence \cite{Carlsson2009} where the filtration may depend on any number of real-valued parameters, and $n$-tuples of parameter values have an inherited partial order. Unlike single-parameter persistent homology,  which is completely described by a {\em persistence diagram}, multi-parameter persistence modules contain more information than it is possible to handle, understand, and visualize easily. Therefore, it is convenient to summarize them by simpler invariants.  
 
Among the various invariants considered for a multi-parameter persistence module, such as the blockcodes of \cite{dey2020generalized} and the multi-graded Betti numbers of \cite{lesnick2020computing}, one of the simplest invariants  is the {\em rank invariant} \cite{Carlsson2009}, defined as the function that counts the number of linearly independent homology classes that live in the filtration through any given pair of values of the multi-parameter. Theoretically, computation of the rank invariant can be carried out by fibering it along lines with positive slope in an $n$-dimensional space, where $n$ is the number of parameters \cite{Cerri-et-al2013}. Indeed, the  rank invariant of the restriction 
  of a persistence module to an increasing line  is completely described by a persistence diagram.
  The union of all such persistence diagrams forms a compact object called the persistence space \cite{CerriLandi}. Practically though, one needs to restrict the  number of relevant lines to a reasonable number. 
   For example, the state-of-the-art tool for rank invariant visualization RIVET \cite{Lesnick-Wright2015} achieves reduction to some template lines using Betti tables, takes $O( m^3)$ runtime (with $m$ the number of simplices), and is limited to two parameters.

 Given the dimensional, computational, and interpretability limits of the currently available methods,   multi-parameter persistence is not yet a viable option for data analysis.  It is thus worth  exploring different approaches that could enable faster computation and  better understanding of the rank invariant  in a dimension-agnostic way. To this end, we propose to exploit the information contained in the critical cells of a discrete gradient vector field consistent with the given multi-filtration as a means to enhancing geometrical understanding and computational efficiency of multi-parameter persistence.

Evidence for the usefulness of Morse theory in multi-parameter persistence is given in \cite{ReducingComplexes,Allili2019,Scaramuccia-et-al-2020}. In these papers, discrete Morse theory is used to reduce the multi-parameter persistence input data size by substituting the original simplicial complex with a Morse complex containing only critical cells but having the same persistence as the initial complex. This approach takes advantage of the fact that number of critical cells is very small in comparison with $m$, the total number of simplices. Computations are sped up by reducing the actual value of $m$, while the runtime complexity of obtaining a discrete gradient vector field compatible with the filtration is $m\cdot s^2$, where $s$ denotes the maximum number of simplices in a vertex star. Although in the worst case $s$ may be as large as $m$, in many applications it is negligible in comparison to $m$, so that the complexity may be considered linear in $m$. Moreover, as shown by tests in \cite{Scaramuccia-et-al-2020}, this global reduction also avoids repeating the retrieval of the same null persistence pairs when fibering the rank invariant along different lines, improving time performances proportionally to the number of lines: the larger the number of lines, the more convenient the reduction to the Morse complex.

Supported by such empirical evidence, in this paper we investigate the theoretical connection between critical cells of discrete Morse theory and rank invariant computation along lines, with the goal of improving the available methods for fibering the rank invariant of multi-parameter persistence in a way that is computationally efficient,  geometrically interpretable, and readily visualizable.  To this end, we exploit the correspondence between critical cells of a discrete gradient vector field and topological changes in filtrations compatible with it to:
  \begin{enumerate}[noitemsep]
    \item Show that the rank invariant for $n$-parameter persistence modules can be computed by selecting a small number of values in the parameter space $\RR^n$ determined by the critical cells of the discrete gradient vector field, and using these values to reconstruct the rank invariant for all other possible values in the parameter space (Theorem \ref{thm:rankinv}). 
    \item Use such critical values to define an equivalence relation among lines so that representatives from each equivalence class form a convenient selection of lines for fibering the rank invariant (Definition \ref{def:rec-position}).
   \item Show that fibering the rank invariant along lines in the same equivalence class yields persistence diagrams that can be obtained one from the other by simple bijections between critical values (Theorem \ref{thm:representative-dgm}). 
   \item Present a method based on the previous results for computing any fiber of the rank invariant using only finitely many template fibers (Section \ref{sec:conclusion}).
  \end{enumerate}
We emphasize that all of our results hold for any number of parameters, thus improving the state-of-the-art methods that allow for only two. Moreover, our method's input requires only the critical cells of a discrete gradient vector field, so that the required pre-processing can be achieved in just linear time according to various available algorithms \cite{ReducingComplexes,Allili2019,Scaramuccia-et-al-2020}. Finally, the connection to Morse theory allows for a more immediate geometric interpretation and visualization of the topological features detected by persistence as pairs of critical cells in the given simplicial complex.\\

The outline of this paper is as follows:
in Section \ref{sec:Notation and Defs}, we outline background information which will be necessary for the reader to understand the rest of the text. Section \ref{sec:rank_inv} focuses on computing the rank invariant, supported by lemmas and diagram-chasing. Section \ref{sec:computing_pers_space} explains the computation of the persistence space along with delving into fibering the rank invariant along equivalent lines. We conclude with Section \ref{sec:conclusion} by laying out a method for fibering the rank invariant of persistence by lines, and suggesting possible applications. Finally, in Appendix \ref{app:cutsandpairs} we discuss a result that allows us to develop an algorithm which chooses a representative line for each equivalence class in the case of $2$-parameter persistence modules. The generalization of this algorithm to higher dimensional persistence modules remains future work.

\section{Notation and Definitions}
\label{sec:Notation and Defs}
These definitions are partially based on the definitions in \cite{Lesnick-Wright2015,Kerber-Lesnick-Oudot2018} and \cite{forman2001}.

Let $\K$ be a  field. For computational purposes, $\K$ is often taken to be finite.  Define the following partial order on $\RR^n$: for $u=(u_i),v=(v_i)\in\RR^n$,  we say that $u\preceq v$ (resp.  $u\prec v$) if and only if $u_i\leq v_i$ (resp. $u_i<v_i$) for all $i$. The poset $(\RR^n,\preceq)$ will be our {\em parameter space}.

\begin{definition}[Persistence module]\label{def:PM}
  A {\em persistence module} $\V$ over the parameter space $\RR^n$ is an assignment of a $\K$--vector space $V_u$ to each $u\in\RR^n$, and transition maps $i^{u,v}:V_u\to V_v$ to pairs of points $u\preceq v\in\RR^n$, satisfying the following properties:
  \begin{itemize}
  \item $i^{u,u}$ is the identity map for all $u \in\RR^n$.
  \item $i^{v,w}\circ i^{u,v}=i^{u,w}$ for all $u\preceq v\preceq w \in\RR^n$.
  \end{itemize}
  A persistence module over $\RR^n$ is also known as an $n$-parameter persistence module or an $n$D persistence module.
\end{definition}

\begin{definition}[Rank invariant]\label{def:rank-inv}
For $n\geq 1$, let $\mathcal{H}^n\subset\RR^n\times\RR^n$ be the subset of pairs $(u,v)$ such that $u\preceq v$.  Let $\V$ be an $n$D persistence module. Then the {\em rank invariant of $\V$} is a function $\rank_{\V} \colon \mathcal{H}^n \to \ZZ$, defined as
\[\rank_{\V}(u,v) = \textrm{rank}(i^{u,v}).\]
\end{definition}

Persistence modules most often arise from filtered simplicial complexes.

\begin{definition}[Finite simplicial complex]\label{def:finite-simp-comp}
A {\em finite simplical complex} $K$ is a collection of subsets of a finite set  $V_0$, such that:
\begin{itemize}
\item all singletons of $V_0$ are in $K$, and
\item If $\beta\in K$ and $\alpha\subset\beta$, then $\alpha\in K$.
\end{itemize}
The elements of $V_0$ are called {\em vertices}, and the elements of $K$ are called {\em simplices}.  If $\alpha\in K$ contains $p+1$ vertices, then we say $\alpha$ has {\em dimension} $p$, sometimes denoted $\alpha^{(p)}$.  If $\beta\in K$ and $\alpha \subset \beta$, then we say $\alpha$ is a {\em face} of $\beta$ and $\beta$ is a {\em coface} of $\alpha$, and denote this by $\alpha<\beta$. If $\alpha$ is a codimension one face of $\beta$, we say that $\alpha$ is a {\em facet} of $\beta$ and $\beta$ is a {\em cofacet} of $\alpha$.  If $\alpha$ is a face of a dimension $p$ simplex $\beta$, and is not a face of any other $p$ dimensional simplex, then we say that $\alpha$ is a {\em free face} of $\beta$.
\end{definition}


\begin{definition}[Filtration]\label{def:filtration}
  Let $K$ be a finite simplicial complex. An \emph{$n$-parameter filtration} of $K$ is a collection of subcomplexes $\KK=\{K^u\}_{u\in\RR^n}$ of $K$, such that $K^u \subset K^v$ whenever $u \preceq v$, and moreover that
  \[
    K = \bigcup_{u\in \RR^n}K^u.
  \]
  A complex with such a filtration is called a \emph{multi-filtered simplicial complex} or an \emph{n-filtered simplicial complex}.
\end{definition}
\begin{remark*}
Since $K$ is finite, there is some $u \in \RR^n$ such that $K = K^u$.
\end{remark*}

\begin{definition}\label{def:entrance-value}
  We say that a simplex $\sigma$ in $K$ has {\em entrance value} $u\in\RR^n$ if
  \[\sigma\in K^u-\bigcup_{w\preceq u, u \neq w}K^w\]
 \end{definition}
 
 This value tells us where $\sigma$ has entered the filtration. Note that if the entrance value of $\sigma$ is $u$, then $\sigma\in K^v$ for all $u\preceq v$, and $\sigma \notin K^w$ for all $w \preceq u$ such that $u \neq w$.
 Therefore, if $u$ is an entrance value of $\sigma$, then $u$ is a (possibly non-unique) minimal value of the set $\{v \in \RR^n \mid \sigma \in K^v\}$.
 
 According to the definition of filtration, if a simplex $\sigma$ enters the filtration at $u$, all of its faces have to be in $K^u$ as well. The faces can enter jointly with $\sigma$ or at  earlier values of the filtration. However, entrance values are not guaranteed to exist.  To avoid pathological situations in which a simplex does not have any entrance values, such as in the 1-parameter  filtration of $K$ defined by $K^u=\emptyset$ for each $u\le 0$ and $K^u=K$ for each $u>0$, we will only consider filtrations as follows.

\begin{definition}\label{tamenes}(Tameness)  A filtration of a simplicial complex $K$ is said to be {\em tame} if each simplex of $K$ has at least one entrance value. 
\end{definition}

\begin{remark*}
For a tame filtration, it is guaranteed that there exists some $u \in \RR^n$ such that $K^u=\varnothing$.
\end{remark*}

\begin{example}
  Consider a function $f\colon K \to \RR^n$ that is monotonic with respect to the face relation.
  That is, $\alpha < \beta \in K$ implies that $f(\alpha)\preceq f(\beta)$.
  Every such function gives rise to a tame filtration, by defining the filtered pieces of $K$ to be sublevel sets as follows:
  \[K^v = f^{-1}( \{u\in\RR^n:u \preceq v\}).\]
\end{example}
On the other hand, not all filtrations of $K$ as in Definition \ref{def:filtration} define a function $f: K \to \RR^n$, even under the tameness assumptions, because the entrance value of a simplex may not be unique. This motivates the following definition that can be found in \cite{Carlsson}.

\begin{definition}[One-criticality]\label{def:one-criticality}
An $n$-parameter filtration is said to be \emph{one-critical} if every simplex of $\KK$ has a unique entrance value in the filtration. 
\end{definition}

\begin{prop}
  Each tame one-critical filtration of a simplicial complex $K$ is the sublevel set filtration of a monotonic function $f\colon K\to \RR^n$.
  Conversely, if $f \colon K \to \RR^n$ is monotonic, then its sublevel sets form a tame and one-critical filtration of $K$.
\end{prop}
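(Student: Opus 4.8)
The plan is to exhibit the two constructions — ``take entrance values'' and ``take sublevel sets'' — and show they are mutually inverse. The converse direction is the routine one, so I would dispatch it first: given a monotonic $f\colon K\to\RR^n$, set $K^v := f^{-1}(\{u\in\RR^n : u\preceq v\}) = \{\sigma\in K : f(\sigma)\preceq v\}$. Nestedness $K^v\subseteq K^{v'}$ for $v\preceq v'$ is immediate from transitivity of $\preceq$; the union equals $K$ because $K$ is finite, so any $v$ dominating the finitely many values $f(\sigma)$ gives $K^v=K$; and each $K^v$ is a subcomplex precisely because $f$ is monotonic, since $\alpha<\sigma$ and $f(\sigma)\preceq v$ force $f(\alpha)\preceq f(\sigma)\preceq v$. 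Finally $\{v : \sigma\in K^v\} = \{v : f(\sigma)\preceq v\}$ is the principal up-set generated by $f(\sigma)$, whose unique minimal element is $f(\sigma)$ itself, so the filtration is tame and one-critical.

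For the forward direction I would define $f(\sigma):=u_\sigma$, the entrance value of $\sigma$, which exists by tameness and is unique by one-criticality. The goal is the identity
\[ K^v=\{\sigma\in K : u_\sigma\preceq v\}\quad\text{for all }v\in\RR^n, \]
since its right-hand side is exactly $f^{-1}(\{u:u\preceq v\})$. The inclusion $\supseteq$ is immediate: if $u_\sigma\preceq v$ then $\sigma\in K^{u_\sigma}\subseteq K^v$ by monotonicity of the filtration. The same circle of ideas should yield monotonicity of $f$: if $\alpha<\beta$ then $\alpha$ is a face of $\beta\in K^{u_\beta}$, so $\alpha\in K^{u_\beta}$ because $K^{u_\beta}$ is a subcomplex, whence $u_\beta\in S_\alpha:=\{v:\alpha\in K^v\}$, and I would like to conclude $u_\alpha\preceq u_\beta$.

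The main obstacle is that both the inclusion $\subseteq$ and the monotonicity step above require more than the mere existence of a unique entrance value: they require $u_\sigma$ to be not just a minimal element of $S_\sigma$ but its \emph{minimum}, that is, that $S_\sigma$ equals the principal up-set $\{v:u_\sigma\preceq v\}$. This is the real content of the statement, since an upward-closed subset of $\RR^n$ can have a unique minimal element without being principal; thus one-criticality has to be invoked in a form stronger than ``exactly one entrance value.'' The route I would take is to prove the key lemma that one-criticality forces each appearance set $S_\sigma$ to be closed under componentwise minima, equivalently $K^a\cap K^b=K^{a\wedge b}$ for all $a,b\in\RR^n$. Granting this, tameness promotes the unique minimal element to a minimum: for any $v\in S_\sigma$ the meet $u_\sigma\wedge v$ again lies in $S_\sigma$ and satisfies $u_\sigma\wedge v\preceq u_\sigma$, so minimality forces $u_\sigma\wedge v=u_\sigma$, that is $u_\sigma\preceq v$. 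Once $S_\sigma$ is known to be principal, the inclusion $\subseteq$ and the comparison $u_\alpha\preceq u_\beta$ follow immediately, the two constructions are visibly mutually inverse, and both directions are complete. I expect verifying this key lemma — extracting meet-closedness from the one-criticality hypothesis — to be the delicate part of the argument.
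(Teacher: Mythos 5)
Your converse direction is correct and is essentially the paper's own argument. Your diagnosis of the forward direction is also exactly right, and it is in fact sharper than the paper's treatment: the crux is whether the appearance set $S_\sigma=\{v\in\RR^n : \sigma\in K^v\}$ is a principal up-set, i.e.\ whether the unique entrance value is a \emph{minimum} of $S_\sigma$ rather than merely its unique \emph{minimal} element. At precisely this step the paper writes that $f_{\KK}(\alpha)$ is ``the unique minimum value $u$ such that $\alpha\in K^u$,'' silently promoting minimal to minimum, and it moreover never verifies that the sublevel sets of $f_{\KK}$ reproduce the original filtration $\KK$ --- it only argues monotonicity. So the point you isolate is exactly what is unproved there.

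However, the key lemma on which your program rests is false, so the gap cannot be bridged this way. Tameness plus one-criticality, as literally defined in the paper, do \emph{not} imply that the sets $S_\sigma$ are meet-closed. Take $K$ to be a single vertex $\sigma$, let
\[
S=\{(x,y)\in\RR^2 : x\geq 0,\ y\geq 0\}\cup\{(x,y)\in\RR^2 : x\geq 1\},
\]
and set $K^v=\{\sigma\}$ if $v\in S$ and $K^v=\varnothing$ otherwise. Since $S$ is upward closed this is a filtration, and $(0,0)$ is its unique minimal element (any point of $S$ with $y<0$ has $x\geq 1$ and lies strictly above $(x,y-1)\in S$, hence is not minimal), so the filtration is tame and one-critical with entrance value $(0,0)$. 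Yet $(1,-1)\in S$ while $(0,0)\wedge(1,-1)=(0,-1)\notin S$: $S$ is neither meet-closed nor principal, so this filtration is not the sublevel set filtration of any function. Hence the proposition itself, read literally with the paper's definitions, is false, and no proof can exist; one can even defeat monotonicity by taking an edge $\beta$ with vertices $\alpha,\gamma$, $S_\alpha=S$ as above and $S_\beta=S_\gamma=\{(x,y) : x\geq 1,\ y\geq -1\}$, which is tame and one-critical but gives $f_{\KK}(\alpha)=(0,0)\not\preceq (1,-1)=f_{\KK}(\beta)$. Both your argument and the paper's become correct --- indeed nearly immediate, with no delicate lemma needed --- only under the strengthened, evidently intended, reading of one-criticality in which each simplex's appearance set has a minimum (equivalently, is a principal up-set). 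So your instinct that something nontrivial would have to be extracted from one-criticality was accurate; the trouble is that under the stated definitions it simply is not there to extract.
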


\begin{proof}
  Let $K$ be a simplicial complex and $\KK$ a tame and one-critical filtration of $K$.
  We construct a monotonic function $f_{\KK} \colon K \to \RR$ whose sublevel set filtration is precisely $\KK$.
  For each simplex $\sigma \in \KK$, set $f_{\KK}(\sigma)$ to be the unique entrance value of $\sigma$ in $K$ with respect to $\KK$, which exists and is unique because the filtration is tame and one-critical.
  Let $\alpha, \beta \in K$ such that $\alpha < \beta$.
  Recall that if $\beta \in K^u$ for some $u\in \RR^n$, then $\alpha \in K^u$ as well.
  In particular, $\alpha \in K^{f_{\KK}(\beta)}$.
  Since $f_{\KK}(\alpha)$ is the unique minimum value $u$ such that $\alpha \in K^u$, it must be the case that $f_{\KK}(\alpha) \preceq f_{\KK}(\beta)$, and therefore $f_{\KK}$ is monotonic.
  
  Conversely, let $f \colon K \to \RR^n$ be a monotonic function.
  As before, set $\KK$ to be the set of all sublevel sets 
  \[ K^v = f^{-1}(\{u \in \RR^n \colon u \preceq v\}).\]

  Let $\sigma$ be any simplex of $K$.
  It is clear that $\sigma \in K^{f(\sigma)}$.
  We prove that $f(\sigma)$ is the unique entrance value of $\sigma$.
  Note that it is indeed an entrance value: if $u \preceq f(\sigma)$ and $u \neq f(\sigma)$, then $f(\sigma) \notin K^u$.
  Suppose that $v$ is any entrance value of $\sigma$, which means that $\sigma \in K^v$ and $\sigma \notin K^u$ for any $u \neq v$ such that $u \preceq v$.
  Since $\sigma \in K^v$, we must have $f(\sigma) \preceq v$.
  Since $\sigma \notin K^u$ for any $u \neq v$ such that $u \preceq v$, it must be the case that $f(\sigma) \neq u$ for any such $u$.
  Therefore $v = f(\sigma)$.
  Since every simplex $\sigma \in K$ has the unique entrance value $f(\sigma)$, we obtain a tame and one-critical filtration of $K$ from the sublevel sets of $f$.
\end{proof}

To avoid pathologies, we always assume the filtrations considered in this paper are tame and one-critical.

Now think of the poset $\RR^n_\preceq=(\RR^n,\preceq)$ as a category where the objects are elements of $\RR^n$, and the morphisms are given by the order relation.
Let $\SC$ be the category of finite simplicial complexes with inclusions as the morphisms.
A filtered simplicial complex can be thought of as a functor $\KK: \RR^n_\preceq \longrightarrow \SC$.
Further, for each $i \in \ZZ$, we can take the $i$-th homology of a simplicial complex to obtain a vector space.
The two functors above can be composed to obtain a functor from $\RR^n_\preceq$ to $\vect_{\K}$, the category of $\K$-vector spaces.
By Definition~\ref{def:PM}, a persistence module can be viewed as a functor from $\RR^n_\preceq$ to $\vect_{\K}$, so the construction above is a special case of a persistence module.
This motivates the following definition.

\begin{definition}[Persistent homology]\label{def:persistence-hom}
For an integer $i$, the $i$-th {\em multi-parameter persistent homology group} is the persistence module 
\[ H_i\KK: \RR^n_\preceq \longrightarrow \vect_{\K}
\]
defined as the composition of the filtration functor $\KK$ with the $i$-th homology functor for simplicial complexes:
\begin{center}
\begin{tikzcd}
\RR^n_\preceq  \ar[r,"{\KK}"] \ar[drru ,"{ H_i\KK }"' , bend right=20] & \SC \ar[r, "  {\h_i}  "]  & \vect_{\K}. 
\end{tikzcd}
\end{center}
\end{definition}

We now introduce the basic constructions of discrete Morse theory, which we will heavily use in the remainder of the paper.
More details may be found in~\cite{forman2001}.

%
%

\begin{definition}[Discrete vector field]\label{def:dvt}
  A {\em discrete vector field} $\VV$ on $K$ is a collection of pairs of simplices $(\alpha,\beta)$ of $K$ where $\alpha$ is a facet of $\beta$, such that each simplex is in at most one pair of $\VV$.
  Given a discrete vector field $\VV$ on a simplicial complex $K$, a {\em $V$-path} is a sequence of simplices of dimensions $p$ and $p+1$,
  \[\alpha_0,\beta_0,\alpha_1,\beta_1,\alpha_2,\dots,\beta_r,\alpha_{r+1}
  \]
 such that for each $i=0,\dots,r$, we have $(\alpha_i,\beta_i)\in \VV$ and $\beta_{i}>\alpha_{i+1}\neq\alpha_i$.
  A path is called a {\em non-trivial closed path} if $r>0$ and $\alpha_0=\alpha_{r+1}$.
\end{definition}


\begin{definition}\label{def:gradient-vf}
A discrete vector field $\mathcal{V}$ on a simplicial complex $K$ is called a \emph{gradient vector field} if it contains no non-trivial closed $V$-paths. A simplex $\sigma\in K$ is \emph{critical} if it is not paired in $\VV$.
\end{definition}
An example of such a discrete gradient vector field can be found in Figure \ref{fig:dgvf}. 

\begin{figure}[H]
\begin{center}
\begin{tikzpicture}
\input{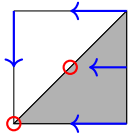}
\end{tikzpicture}
\caption{An example of discrete gradient vector field. The pairs in the vector field are denoted by arrows and the critical cells are marked by a circle.} \label{fig:dgvf}
\end{center}
\end{figure}

\begin{definition}[Elementary collapse]\label{def:elementary-collapse}
 Let $K_1, K_2$ be simplicial complexes such that $K_2\subset K_1$, and $K_1 \setminus K_2=\{\alpha,\beta\}$ where $\alpha$ is a free facet of $\beta$.  Then the combinatorial deformation retract of $K_1$ to $K_2$ given by removing $\alpha$ and $\beta$ is called an {\em elementary (simplicial) collapse}.  The pair $(\alpha,\beta)$ is called a \emph{collapsing pair}.
\end{definition}
\begin{remark*} The definition of elementary collapse is also valid for CW-complexes.
\end{remark*}


\begin{example}
In Figure \ref{fig:collapse} we can see how to collapse the complex in Figure \ref{fig:dgvf} using the given discrete gradient vector field. The first collapsing pair is formed by the edge $\alpha$ and the triangle $\beta$. The second collapse uses the pair $(\gamma, \delta)$, given by the edge $\delta$ and the vertex $\gamma$.


\begin{figure}[H]
\begin{center}
\begin{tikzpicture}
\input{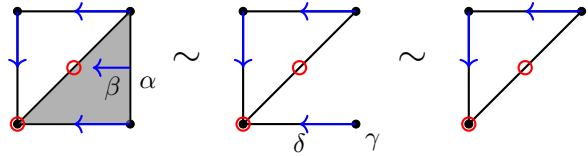}
\end{tikzpicture}
\caption{The complex on the right is obtained by the one on the left by first collapsing the pair $(\alpha, \beta)$, then the pair $(\gamma, \delta)$.} \label{fig:collapse}
\end{center}
\end{figure}
\end{example}

An elementary simplical collapse is a homotopy equivalence between two simplicial complexes, which in turn induces an isomorphism on the level of homology.  Therefore, if $K_1$ and $K_2$ are related through a series of elementary collapses, then $H_i(K_2)\cong H_i(K_1)$ for all $i$. 

In the case when a simplicial complex has no free faces available, it is still possible to simulate an elementary collapse, which in this case is called {\em internal}, by first removing a critical cell in order to obtain a free face, and then reinserting it after updating the incidence relations. The cell complex obtained in this case may no longer be simplicial, but internal  collapses still induce isomorphisms in homology.

To turn simplicial collapses from just homology preserving into persistent homology preserving transformations, it is convenient to confine ourselves to considering gradient vector fields compatible with filtrations.

\begin{definition}[Consistency]\label{def:consistency}
  A discrete gradient vector field $\VV$ on a simplicial complex $K$  is said to be {\em consistent} (or \emph{compatible}) with a  filtration $\KK=\{K^u\}_{u\in\RR^n}$ of  $K$  if the following condition is satisfied:
$$\forall(\sigma,\tau)\in \VV, \sigma\in K^u \iff \tau \in K^u.$$ 
In this case, we say that a value $u\in\RR^n$ is a {\em critical value of $K$} if it is the entrance value of a critical cell of $\VV$.  
\end{definition}

\begin{example}\label{ex:bifiltration}
In Figure \ref{fig:filtration} we see a bifiltration, i.e. a filtration with $2$ parameters, of a finite simplicial complex. This filtration is one-critical and the gradient vector field is consistent with the given filtration.

\begin{figure}[H]
\begin{center}
\begin{tikzpicture}
\input{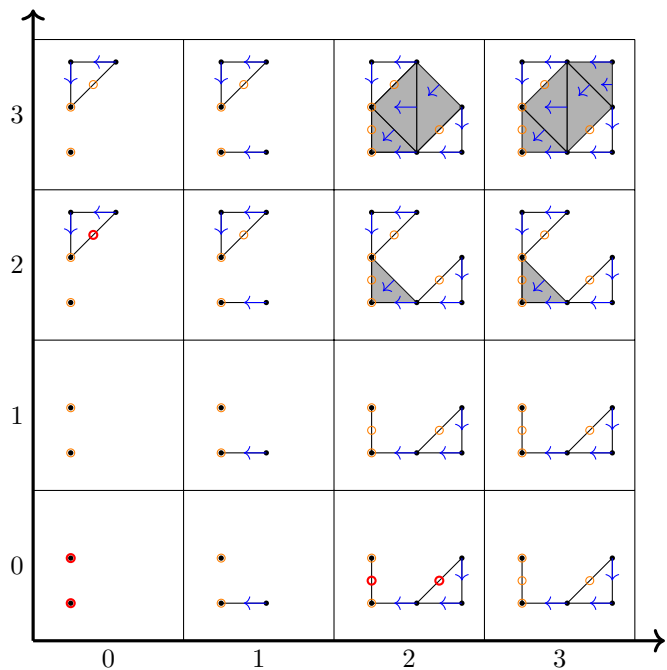}
\end{tikzpicture}
\caption{Each box corresponds to a multi-parameter in $\RR^2$, showing the simplices that are present at the corresponding step of the filtration. As before, the pairs of the discrete gradient vector field are indicated by arrows and the critical cells by circles. As we will see later on, we are interested in the entrance values of the critical cells. When the critical cells enter the filtration, they are denoted in a darker red, whereas they are denoted in a lighter orange for larger values, so as to not lose track of them throughout the filtration.} \label{fig:filtration}
\end{center}
\end{figure}

\end{example}

\section{Computing the Rank Invariant}
\label{sec:rank_inv}
Let $K$ be a finite simplical complex, $\VV$ be a discrete gradient vector field on $K$, and $\KK=\{K^u\}_{u\in\RR^n}$ be a one-critical $n$-parameter filtration on $K$ which is consistent with $\VV$. Then, for each integer $i$, there is an $n$-parameter persistence module $\V_i$ where the $\K$-vector space associated to $u\in\RR^n$ is $H_i(K^u)$, the $i$-th homology group of $K^u$. Furthermore, for $u\preceq v\in\RR^n$, $i_{u,v}$ is the induced map, on the level of homology, of the inclusion of $K^u$ into $K^v$.  The goal of this section is to identify a finite subset of values in $\RR^n$ from which the rank invariant of $\V_i$ can be computed.  In other words, we want some finite $U\subset\RR^n$ such that, for all $u\preceq v\in\RR^n$, there exists $\overline{u}\preceq \overline{v}\in U$ such that $\rho_{\V_i}(u,v)=\rho_{\V_i}(\overline{u},\overline{v})$.

Define $C$ to be the set of critical values of  $\VV$:
\[
C=\{u\in\RR^n|\sigma\textrm{ is critical in }\VV,\textrm{ the entrance value of }\sigma\textrm{ is }u\}.
\]
Let $\overline{C}$ be the closure of $C$ under least upper bound.  Theorem \ref{thm:rankinv} states that our candidate set $U$ as described above is exactly given by $\overline{C}$.  In order to identify each $u\in\RR^n$ with an element of $\overline{C}$, define $\closure{u} = \max \{u' \in \closure{C} | u'\preceq u\}$.

\begin{example} Figure \ref{fig:u_bar} shows the set of critical values and its closure in the parameter space for the bifiltration in Example \ref{ex:bifiltration}. We can see there that for the value $u$, its corresponding $\bar{u}$ is the critical value $c_2$.

\begin{figure}[H]
\begin{center}
\begin{tikzpicture}
\input{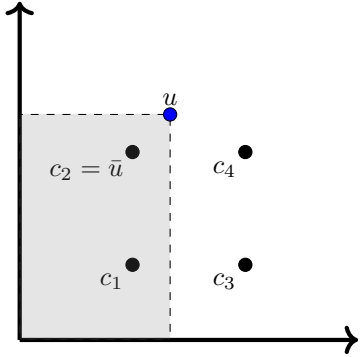}
\end{tikzpicture}
\caption{ Taking the set of critical values $C$ to consist of $\{c_1 , c_2 ,c_3\}$, $\overline{C} = C \cup\left\lbrace c_4 \right\rbrace $.   The grey area contains all values $x \preceq u$, and in this case $\bar{u} = c_2$.} \label{fig:u_bar}
\end{center}
\end{figure}
\end{example}

\begin{lemma}\label{lem:max}
Let $U$ be a non-empty finite set closed under least upper bound. For all $u$ in $\RR^n$, the set $U'=\{u' \in U| u'\preceq u\}$, if non-empty, admits a (unique) maximum.  
 \end{lemma}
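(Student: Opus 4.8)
The plan is to exploit the fact that, because $\preceq$ is the coordinate-wise order on $\RR^n$, the least upper bound of two elements $a,b\in\RR^n$ is simply their coordinate-wise maximum $a\vee b$, whose $i$-th coordinate is $\max(a_i,b_i)$; this always exists in $\RR^n$. Saying that $U$ is closed under least upper bound therefore means precisely that $a\vee b\in U$ whenever $a,b\in U$. Fix $u\in\RR^n$ and assume $U'=\{u'\in U\mid u'\preceq u\}$ is non-empty.

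First I would show that $U'$ inherits closure under least upper bound from $U$. If $a,b\in U'$, then on one hand $a,b\in U$, so $a\vee b\in U$ by the closure hypothesis on $U$; on the other hand $a\preceq u$ and $b\preceq u$ give $(a\vee b)_i=\max(a_i,b_i)\le u_i$ for every coordinate $i$, that is $a\vee b\preceq u$. Combining these two facts yields $a\vee b\in U'$. Thus $U'$ is a non-empty finite set that is itself closed under $\vee$.

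Next, since $U'$ is finite and non-empty, I would form the element $M=\bigvee_{u'\in U'}u'$ by iterating the pairwise least upper bound over its finitely many members. Closure of $U'$ under $\vee$ guarantees $M\in U'$, while by construction $u'\preceq M$ for every $u'\in U'$; hence $M$ is a maximum of $U'$. Uniqueness is then immediate from the antisymmetry of $\preceq$: if $M$ and $M'$ are both maxima of $U'$, then $M\preceq M'$ and $M'\preceq M$, forcing $M=M'$.

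The argument is elementary and I do not anticipate a genuine obstacle; the only point requiring care is that it is the inherited closure of $U'$ (not merely that of the ambient set $U$) together with finiteness of $U'$ that makes the total least upper bound exist and land back inside $U'$. Once that is observed, the maximum and its uniqueness follow directly.
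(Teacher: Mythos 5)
Your proof is correct, but it is structured differently from the paper's. The paper does not construct the maximum directly: it first invokes finiteness to get maximal elements of $U'$, then takes two maximal elements $a,b$, uses closure of $U$ under least upper bound to produce $c \in U$ with $a,b \preceq c \preceq u$ (so $c \in U'$), and concludes $a = c = b$ by maximality — so the unique maximal element is the maximum. You instead show that $U'$ itself inherits closure under pairwise joins (via the same key observation that $a \preceq u$ and $b \preceq u$ force $a \vee b \preceq u$, since the lub in $\RR^n$ is the coordinate-wise maximum), and then exhibit the maximum explicitly as $M = \bigvee_{u' \in U'} u'$, which lies in $U'$ by iterating closure over the finitely many elements. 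Both arguments hinge on the same fact — the join of elements below $u$ stays below $u$ and in $U$ — but your route is constructive and identifies the maximum concretely as the total join (in effect showing $U'$ is a finite join-semilattice, which always has a top element), whereas the paper's argument is shorter, needs only a single pairwise lub, and avoids the induction implicit in the iterated join. Your version has the mild advantage that it gives a formula for $\closure{u}$ usable in computation; the paper's has the advantage of brevity.
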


\begin{proof}
From the fact that $U$ is non-empty and finite, it admits maximal elements. Let $a$ and $b$ be maximal elements in $U'$. Because  $U$ is closed under least upper bound, there is $c$ in $U$ such that $a,b\preceq c \preceq u$. Hence, $c\in U'$. Thus, $c=a=b$ by maximality of $a$ and $b$ in $U'$. 
\end{proof}

The idea behind the definition of $\overline{C}$ is as follows.  Since the filtration $\KK$ has a consistent discrete gradient vector field $\VV$, the tools of discrete Morse theory may be used to identify which elements of $\KK$ are guaranteed to be homotopy equivalent, and therefore have isomorphic homology groups.  Lemma \ref{lem:reduction}    
can be used to show the existence of a simplicial collapse, induced by $\VV$, between certain elements of $\KK$; this simplicial collapse is a homotopy equivalence.

\begin{lemma}\label{lem:reduction}
Let $\KK=\{K^u\}_{u\in\RR^n}$  be a one-critical filtration and $\VV$ a discrete gradient vector field on a finite simplicial complex $K$ consistent with $\KK$.  For $u\in\RR^n$, let $\closure{u} = \max \{u' \in \closure{C} | u'\preceq u\}$, with $C$ the set of critical values of $\VV$ and $ \closure{C} $ its closure under least upper bound. If $K^u-K^{ \closure{u}}$ is non-empty, then it contains two simplices $\sigma,\tau$ such that  $(\sigma,\tau)\in \VV$  and $\sigma$ is a free facet of $\tau$.  
\end{lemma}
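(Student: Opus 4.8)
The plan is to analyze the set $A = K^u \setminus K^{\overline u}$ directly, exploiting three structural facts. First, since $\overline u \preceq u$ and $K^{\overline u}$ is a subcomplex of $K^u$, the set $A$ is the complement of a subcomplex and is therefore \emph{upward closed} in $K^u$: if $\sigma \in A$ and $\sigma$ is a face of some $\rho \in K^u$, then $\rho \in A$, since otherwise $\rho \in K^{\overline u}$ would force its face $\sigma$ into $K^{\overline u}$. Second, $A$ contains no critical cells: if $\rho \in A$ were critical with its (unique) entrance value $c$, then $c \in C \subseteq \overline C$, and $\rho \in K^u$ gives $c \preceq u$ by one-criticality, whence $c \preceq \overline u$ by the maximality defining $\overline u$; but then $\rho \in K^{\overline u}$, contradicting $\rho \notin K^{\overline u}$. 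Third, applying consistency of $V$ with $\KK$ at the two parameters $u$ and $\overline u$, for every pair $(\alpha,\beta)\in V$ we have $\alpha \in A \iff \beta \in A$; together with the second fact this shows every cell of $A$ is paired and each such pair lies entirely inside $A$.

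Next I would reduce the desired ``free facet'' condition to top dimension. Let $p$ be the maximal dimension of a cell of $A$, which is attained since $A$ is finite and nonempty. Any $p$-cell $\tau \in A$ is paired, and its partner cannot be a cofacet (that cofacet would lie in $A$ and have dimension $p+1$), so $\tau$ is the upper cell of a pair $(\sigma,\tau)\in V$ with $\sigma$ a facet of $\tau$. By upward closure every proper coface of $\sigma$ in $K^u$ lies in $A$ and hence has dimension exactly $p$. Thus, by the definition of a free face relative to the $p$-simplex $\tau$, the facet $\sigma$ is a free facet of $\tau$ in $K^u$ precisely when $\tau$ is the \emph{only} $p$-dimensional coface of $\sigma$ in $A$. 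It therefore suffices to find a top pair $(\sigma,\tau)$, with $\dim\tau=p$, whose facet $\sigma$ has no second $p$-coface in $A$.

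The heart of the argument is to produce such a pair using acyclicity of $V$, and I would argue by contradiction. Assume every top pair $(\sigma_i,\tau_i)\in V$ has its facet $\sigma_i$ contained in a second $p$-cell $\tau_i' \neq \tau_i$ of $A$. Starting from any such pair, let $(\sigma_{i+1},\tau_{i+1})$ be the top pair with upper cell $\tau_{i+1}:=\tau_i'$. Since $\sigma_i$ is a facet of $\tau_{i+1}$ distinct from the partner $\sigma_{i+1}$, the cells $\sigma_{i+1},\tau_{i+1},\sigma_i,\tau_i$ form a legal segment of a $V$-path read through the pairs in reverse: indeed $(\sigma_{i+1},\tau_{i+1})\in V$, $\tau_{i+1} > \sigma_i$, and $\sigma_i \neq \sigma_{i+1}$. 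As there are only finitely many top pairs and the construction never halts, some pair recurs; extracting the resulting cycle yields a closed $V$-path, and because consecutive upper cells differ ($\tau_{i+1}\neq\tau_i$) this cycle has length at least two, so the closed $V$-path is nontrivial. This contradicts $V$ being a gradient vector field, so a free top pair exists, proving the lemma.

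I expect the main obstacle to be the bookkeeping in this final step: orienting the facet/coface incidences so that the chain of top pairs genuinely forms a $V$-path in the sense of Definition~\ref{def:dvt}, and checking that the recurrence yields a \emph{nontrivial} closed path rather than a degenerate one. (An alternative that avoids the explicit path construction is to invoke Forman's theorem that $V$ is the gradient of a discrete Morse function $f$, and to choose the top pair maximizing $f(\sigma)$: were $\sigma$ contained in another $p$-cell $\tau'$ with partner $\sigma'$, the discrete Morse inequalities would force $f(\sigma')\ge f(\tau') > f(\sigma)$, contradicting maximality.)
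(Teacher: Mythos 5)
Your proof is correct and follows essentially the same route as the paper's: both take the top-dimensional cells of $K^u \setminus K^{\closure{u}}$, use the maximality defining $\closure{u}$ to rule out critical cells, use consistency of $V$ to show each such cell is the upper simplex of a pair lying in the difference, and then derive a non-trivial closed $V$-path (contradicting acyclicity) under the assumption that no such pair is a collapsing pair. Your write-up is somewhat more explicit than the paper's --- isolating upward closure of the difference set, excluding critical cells from all of $A$ rather than just its top dimension, and reducing the free-facet condition to counting $p$-dimensional cofaces --- but the core argument is identical.
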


\begin{proof}
Because $K^u-K^{ \closure{u}}$ is non-empty and finite, we can take $\tau\in K^u-K^{ \closure{u}}$ of maximal dimension. Denote by $\lub(u,v)$ the least upper bound of two elements $u,v\in \RR^n$. If $\tau$ is critical,  there is a critical value $\hat u$ such that  $\closure{u}\ne  \lub(\hat u, \closure{u})\preceq u$. Since $\lub(\hat u, \closure{u})\in \closure{C}$, this contradicts the definition of $\closure{u}$.

 Thus, it must be that $\tau$ is non-critical, and belongs to a vector $(\sigma,\tau)\in \VV$. Because $\VV$ is consistent with $\KK$, $\sigma$ must belong to  $K^u-K^{ \closure{u}}$ as well, and since $\tau$ is of maximal dimension, it must be that $\sigma$ is a face of $\tau$.  Hence, each of the simplices of maximal dimension in $K^u-K^{ \closure{u}}$ belongs to a vector $(\sigma,\tau)\in \VV$ with $\sigma$ also in $K^u-K^{ \closure{u}}$, and there are finitely many of them because $K$ is finite.
 
 We claim that one of the pairs of simplices $(\sigma,\tau)\in \VV$, with $\tau$ of maximal dimension in $K^u-K^{ \closure{u}}$, must be a collapsing pair, meaning $\sigma$ is a free facet of $\tau$.  Supposing that this is not the case, we create a non-trivial cyclic path in $\VV$, contradicting the assumption that $\VV$ is a discrete gradient vector field.
 
 Choose a pair $(\sigma_1,\tau_1)\in \VV$  of simplices in $K^u-K^{ \closure{u}}$, with $\tau_1$ of maximal dimension. If $\sigma_1$ is not a free facet of $\tau_1$, there exists another cofacet $\tau_2\in K^u$ such that $\sigma_1<\tau_2\neq\tau_1$.  As $\VV$ is consistent with $\KK$, $\sigma_1$ cannot be added to the filtration after $\tau_2$, so it must be that $\tau_2\in K^u-K^{ \closure{u}}$ as well.  Moreover, since the dimension of $\tau_1$ is  maximal and $\sigma_1$ is a facet of $\tau_2$,  $\tau_1$ and $\tau_2$ must have the same dimension. Thus also $\tau_2$ is of maximal dimension.  Thus, $\tau_2$ is in some non-critical pair $(\sigma_2,\tau_2)\in \VV$  of simplices in $K^u-K^{ \closure{u}}$.  Finally, since $\VV$ is a discrete gradient vector field, $\sigma_1$ can only exist in one pair of $\VV$, so that $\sigma_2\ne \sigma_1$.
 
 We may iterate the above argument.  After an appropriate finite number of iterations, we obtain the following $V$-path:
 
 \[
 \sigma_1<\tau_1>\sigma_2<\tau_2>\dots<\tau_{n-1}>\sigma_n<\tau_n>\sigma_{n+1}.
 \]
 
Since there are only finitely many possible choices for $\sigma_i$, it must be that $\sigma_i=\sigma_j$ for some $i\neq j\in\{1,\dots,n+1\}$.  The portion of the $V$-path between $\sigma_i$ and $\sigma_j$ is non-trivial and cyclic. This is a contradiction, as $\VV$ is a discrete gradient vector field. Thus,  for some pair $(\sigma,\tau)\in \VV$ of simplices in $K^u-K^{ \closure{u}}$ with $\tau$ of maximal dimension, $\sigma$ must be  a free facet of $\tau$, proving our claim.
  \end{proof}

We may now prove the main result of the section, which gives a formula for computing the rank invariant for any pair $(u,v)$, using elements of $\overline{C}$.  

\begin{theorem}\label{thm:rankinv}
   Let $K$ be a finite simplicial complex, and let $\KK=\{K^u\}_{u\in\RR^n}$ be a filtration on $K$.  Suppose $\VV$ is a discrete gradient vector field on $K$  consistent with $\KK$. Let $\V_i=H_i(\KK)$. Then, for all $u\preceq v$, 
   \[\rank_{\V_i}(u,v)=\rank _{\V_i} (\closure{u},\closure{v})\]
   with 
   \[\closure{u} = \max \{u' \in \closure{C} | u'\preceq u\},\]
   \[\closure{v} = \max \{v' \in \closure{C} | v'\preceq v\}.\] 
if $\{u' \in \closure{C} | u'\preceq u\}$ is non-empty, and $\rank_{\V_i}(u,v)=0$ otherwise.
 \end{theorem}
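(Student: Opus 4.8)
The plan is to reduce the computation of $\rank_{\V_i}(u,v)$ to that of $\rank_{\V_i}(\closure{u},\closure{v})$ by producing a commutative square of homology maps in which the two vertical arrows, induced by the inclusions $K^{\closure{u}}\hookrightarrow K^u$ and $K^{\closure{v}}\hookrightarrow K^v$, are isomorphisms. Since the rank of a linear map is unchanged by pre- and post-composition with isomorphisms, the two horizontal ranks will then coincide.

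First I would dispose of the degenerate case in which $\{u'\in\closure{C}\mid u'\preceq u\}$ is empty. I claim this forces $K^u=\varnothing$, so that $H_i(K^u)=0$ and $i^{u,v}$ has rank $0$. Indeed, consistency of $V$ with $\KK$ means that for every $(\sigma,\tau)\in V$ we have $\sigma\in K^u\iff\tau\in K^u$; hence the pairs of $V$ that lie in $K^u$ form an acyclic matching whose unmatched cells are exactly the critical cells of $V$ contained in $K^u$. A non-empty complex carrying an acyclic matching has at least one critical cell, since otherwise every vertex would be matched upward to an edge, giving each vertex out-degree one in the matching digraph and hence a directed cycle, i.e.\ a non-trivial closed $V$-path. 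Such a critical cell lies in $K^u$ and so has its (unique) entrance value $w\preceq u$ with $w\in C\subseteq\closure{C}$, contradicting emptiness of the set. Thus $K^u=\varnothing$, settling this case.

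Now suppose $\{u'\in\closure{C}\mid u'\preceq u\}$ is non-empty. By Lemma \ref{lem:max} applied to $\closure{C}$, both $\closure{u}$ and $\closure{v}$ are well-defined; moreover $u\preceq v$ gives $\{u'\in\closure{C}\mid u'\preceq u\}\subseteq\{v'\in\closure{C}\mid v'\preceq v\}$, so the latter is non-empty and $\closure{u}\preceq\closure{v}$ follows by maximality of $\closure{v}$. The four inclusions $K^{\closure{u}}\subseteq K^u$, $K^{\closure{u}}\subseteq K^{\closure{v}}$, $K^{\closure{v}}\subseteq K^v$, and $K^u\subseteq K^v$ form a commutative square of simplicial complexes, and applying $H_i$ with functoriality yields a commutative square whose horizontal arrows are $i^{\closure{u},\closure{v}}$ and $i^{u,v}$ and whose vertical arrows are the inclusion-induced maps $H_i(K^{\closure{u}})\to H_i(K^u)$ and $H_i(K^{\closure{v}})\to H_i(K^v)$.

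It remains to show these two vertical maps are isomorphisms, which I would get by showing that $K^u$ collapses onto $K^{\closure{u}}$ and $K^v$ onto $K^{\closure{v}}$. Lemma \ref{lem:reduction} supplies, whenever $K^u-K^{\closure{u}}$ is non-empty, a pair $(\sigma,\tau)\in V$ with $\sigma$ a free facet of $\tau$, both in $K^u-K^{\closure{u}}$; removing them is an elementary collapse and hence a homotopy equivalence. I would then iterate this. The main obstacle I anticipate is that after removing such a pair the intermediate complex $K^u\setminus\{\sigma,\tau\}$ is in general no longer a sublevel set of $\KK$, so Lemma \ref{lem:reduction} cannot be quoted verbatim at the next step. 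To get around this I would note that the proof of Lemma \ref{lem:reduction} uses only that every cell of the difference $K^u-K^{\closure{u}}$ is non-critical (any critical cell there, with entrance value $\hat u$, would yield $\lub(\hat u,\closure{u})\in\closure{C}$ strictly between $\closure{u}$ and $u$, contradicting maximality of $\closure{u}$) and is matched by $V$ within the difference, together with acyclicity of $V$; all three properties are inherited by the smaller difference after deleting a complete $V$-pair. Hence the same maximal-dimension-plus-no-closed-path argument produces a collapsing pair at each stage, and induction on $|K^u-K^{\closure{u}}|$ gives a sequence of elementary collapses from $K^u$ down to $K^{\closure{u}}$. Therefore $K^{\closure{u}}\hookrightarrow K^u$ is a homotopy equivalence, its induced map in homology is an isomorphism, and likewise for $\closure{v}$. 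Feeding this into the commutative square and using invariance of rank under composition with isomorphisms yields $\rank_{\V_i}(u,v)=\rank_{\V_i}(\closure{u},\closure{v})$, completing the proof.
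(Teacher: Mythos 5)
Your proposal is correct and takes essentially the same route as the paper's proof: dispose of the empty case by exhibiting a critical cell in any non-empty $K^u$, then iterate Lemma \ref{lem:reduction} to collapse $K^u$ onto $K^{\closure{u}}$ (and $K^v$ onto $K^{\closure{v}}$), and feed the resulting inclusion-induced isomorphisms into the commutative square to equate the ranks. The only difference is bookkeeping at the iteration step: the paper re-applies Lemma \ref{lem:reduction} to the collapsed complex equipped with the restricted vector field and induced filtration, whereas you re-run the lemma's argument after verifying that its essential hypotheses (non-criticality of the difference, matching within the difference, acyclicity) persist after each collapse --- both handle the same technical point, yours arguably more explicitly.
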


\begin{proof}
Let $u,v\in\RR^n$. Because $C$ is non-empty and $\overline C$ contains all least upper bounds, if $\{u' \in \closure{C} | u'\preceq u\}=\emptyset$, then $K^u=\emptyset$, yielding $\rank_{\V_i}(u,v)=0$. Otherwise, if $\{u' \in \closure{C} | u'\preceq u\}$ is non-empty, then also $\{v' \in \closure{C} | v'\preceq v\}$ is so, and  $\closure{u},\closure{v}$ exist and are unique by Lemma \ref{lem:max}.

Set $K_0:=K^u$. If $K_0=K^{\closure{u}}$, then obviously $H_i(K^u)= H_i(K^{\closure{u}})$ for all $i$. Otherwise, $K_0-K^{\closure{u}}$ is non-empty and by Lemma \ref{lem:reduction} there is a pair  $(\sigma_1,\tau_1)\in \VV$  of simplices of $K_0-K^{\closure{u}}$ such that $\sigma_1$ is a free facet of $\tau_1$. Let $K_1$ be the simplical complex obtained by performing the elementary collapse of $\sigma_1$ onto $\tau_1$; note that this means that the map induced on homology by the inclusion of $K_1$ into $K_0$ is an isomorphism: $H_i(K_1)\cong H_i(K_0)$.  Now, either $K_1=K^{\closure{u}}$, in which case  obviously $H_i(K_1)= H_i(K^{\closure{u}})$ for all $i$, or, we may restrict $\VV$ to $K^1$ and consider the filtration of $K_1$ induced from that of $K_0$. By applying Lemma \ref{lem:reduction} to $K_1$, there is  a vector in $\VV$ whose elementary collapse gives $K_2$ such that 
 the map induced by the inclusion of $K_2$ into $K_1$ is an isomorphism: $H_i(K_2)\cong H_i(K_1)$. By induction, for any $r\ge 1$, either $K_r=K^{\closure{u}}$, in which case  obviously $H_i(K_r)= H_i(K^{\closure{u}})$ for all $i$, or, we may restrict $\VV$ to $K_r$ and consider the filtration of $K_r$ induced from that of $K_{r-1}$. By applying Lemma \ref{lem:reduction} to $K_r$, there is  a vector in $\VV$ whose elementary collapse gives $K_{r+1}$ such that  the map induced by the inclusion of $K_{r+1}$ into $K_1$ is an isomorphism: $H_i(K_{r+1})\cong H_i(K_r)$. By the finiteness of $K$, there must be a value of $r$ such that $K_r=K^{\closure{u}}$, yielding that the map induced by the inclusion of $K^{\closure{u}}$ into $K^u$ is an isomorphism for all $i$:
\[
H_i(K^u)\cong H_i(K^{\overline{u}})
\]
Analogous argument works for $v$:
\[
H_i(K^v)\cong H_i(K^{\overline{v}}).
\]
Because the above isomorphisms are induced by inclusions,  the following diagram commutes and gives equality of the $i^{th}$ rank invariants of the pairs $(u,v)$ and $(\overline{u},\overline{v})$:
\[
\begin{tikzcd}
H_i(K^u) \arrow[r,"i_*^{u,v}"] \arrow[d, "\cong"]& H_i(K^v) \arrow[d, "\cong"] \\
H_i(K^{\overline{u}}) \arrow[r,"i_*^{\overline{u},\overline{v}}"] &H_i(K^{\overline{v}}).
 \end{tikzcd}
\]
\end{proof}

\section{Computing the persistence space}
\label{sec:computing_pers_space}
For the sake of visualization, the rank invariant of an $n$-parameter persistence module $\V$ can be completely encoded as a multiset of points known as a {\em persistence diagram} when $n=1$ \cite{Cohen-Steiner2007}, and as a {\em persistence space} when $n\ge 1$ \cite{CerriLandi}. By completeness of the encoding we mean that the rank invariant can be exactly reconstructed from the persistence space   (cf. the $k$-Triangle Lemma in \cite{Cohen-Steiner2007} and the Representation Theorem  in \cite{CerriLandi}). 

While the persistence space is easier to visualize than the rank invariant, as it is a set of points rather than a function, still for an $n$-parameter persistence module it lives in a $2n$-dimensional space. So, it is convenient  to visualize it along fibers \cite{Cerri-et-al2013}. For example, PHOG \cite{PHOG} and RIVET \cite{Lesnick-Wright2015} visualize the persistence space of a $2$-parameter persistence module by fibering it through lines. 

The goal of this section is to propose a computational procedure to recover such fibration along lines for persistence modules with any number of parameters, by using critical values of gradient vector fields. 

We start by reviewing the necessary definitions and properties.\\

A point $(u, v) \in  \mathcal{H}^n$ belongs to the persistence space $\mathrm{spc}({\V})$ if and only if its {\em multiplicity}
\begin{eqnarray}
\mu_{\V}(u,v) := \min_{\scriptsize{\begin{array}{c}
\vec e\succ  0\\
u+\vec e\prec v-\vec e\end{array}}}\rank_{\V}(u+\vec e,v-\vec e)- \rank_{\V}(u-\vec e,v-\vec e)+\\ -\rank_{\V}(u+\vec e,v+\vec e)+\rank_{\V}(u-\vec e,v+\vec e)
\end{eqnarray}\label{def:mu}
is positive. This corresponds to the number of independent cycles that, along a positive direction in the parameter space,  appear  at $u$ and become boudaries at $v$.  

Similarly, a point $(u, \infty)$ belongs to the persistence space of $\V$ if and only if its multiplicity
\begin{eqnarray}
\mu_{\V}(u,\infty) := \min_{\scriptsize{\begin{array}{c}
\vec e\succ  0\\
 v\succeq u\end{array}}}\rank_{\V}(u+\vec e,v)- \rank_{\V}(u-\vec e,v)
\end{eqnarray}\label{def:mu-infty}
is positive. This corresponds to the number of  independent cycles that, along a positive direction in the parameter space, appear exactly at $u$ and persist for every larger value of the parameter. 

In both cases, the multiplicity can be computed by fixing a direction for $\vec e$ and only varying its length (with alternate sums of the ranks decreasing as the length decreases). Two convenient directions for $\vec e$ are the diagonal direction and the $v-u$ direction. Moreover, for points at infinity, the multiplicity is reached for increasing values of $v$. 

In particular, for $n=1$, the persistence space is the persistence diagram of a $1$-parameter persistence module. In terms of intervals in a persistence module bar decomposition, points in $\mathcal{H}^n$ of positive multiplicity correspond to finite intervals, points at infinity of positive multiplicity correspond to infinite intervals.

\subsection{Restriction of a persistence module to lines}

 Given a line $L$ contained in the parameter space $\RR^n$, each point $u\in L$ can be written as $u=\vec mt+u_0$, with $u_0$ a fixed starting point on $L$, $\vec m\in\RR^n$ a fixed velocity vector,  and $t$ a real parameter.  If $\vec 0\prec \vec m$, we say that $L$ has {\em positive slope}.

For  an $n$-parameter persistence module $\V$ and a line $L\subseteq \RR^n$ with positive slope, the {\em restriction} of $\V$ to $L$  is the persistence module $\V_L$ that assigns  $V_u$ to each $u\in L$, and whose transition maps $i_L^{u,v}:V_u\to V_v$ for $u\preceq v\in L$ are the same as in $\V$. Once a parametrization $u=\vec mt+u_0$ of $L$ is fixed, the persistence module $\V_L$ is isomorphic to the 1-parameter persistence module, by abuse of notation denoted by $\V_L$, that assigns to each $t\in\RR$ the vector space $(\V_L)_t=\V_u$, and to $s<t\in\RR$, the transitions maps $i^{s,t}=i_L^{u,v}=i^{u,v}$. 

By construction, for  $u=\vec ms+u_0$ and $v=\vec mt+u_0$, it holds that
$$\rank_\V(u,v)=\rank_{\V_L}(s,t).$$
Hence, the multiplicity of a point $(u, v) \in  \mathcal{H}^n$ in $\mathrm{spc}({\V})$ coincides with that of $(s,t)\in \mathcal{H}^1$ in $\mathrm{dgm}({\V_L})$:  
$$\mu_{\V}(u,v)=\mu_{\V_L}(s,t).$$

In conclusion, the persistence space $\mathrm{spc}({\V})$ can  be viewed as the fibered union of infinitely many persistence diagrams $\mathrm{dgm}({\V_L})$, each associated with a line $L$ with positive slope.

\subsection{Critical values determine the persistence space}

Our next goal is to demonstrate that, for a persistence module $\V$ obtained from a tame and one-critical filtration $\KK$ of a simplicial complex, points of the persistence space $\mathrm{spc}({\V})$ are completely determined by the critical values of a discrete gradient vector field $\VV$ compatible with $\KK$. This claim is proven in Proposition \ref{prop:fiber}. The underlying idea to prove it is as follows.  

As $\mathrm{spc}({\V})$ can  be viewed as the fibered union of infinitely many persistence diagrams $\mathrm{dgm}({\V_L})$, with each $\V_L$ obtained by restricting $\V$ to a line $L$ with positive slope,   the filtration $\KK$ may also be restricted to $L$. This way   we obtain a 1-parameter filtration $\KK_L$, and $\V_L$ turns out to be the persistence module of $\KK_L$. Moreover, if $\KK$ has a compatible discrete gradient vector field $\VV$, then this discrete gradient vector field is inherited by $\KK_L$. Each critical cell  of $\VV$ has an entrance value in $\KK_L$ (as $L$ has positive slope). As is the case with $\KK$, the entrance values in $\KK_L$ of the critical cells in $\overline{C}$ identify elements in $\KK_L$ where the filtration may undergo a change in homotopy type, and therefore a change in homology.  Therefore, to determine $\mathrm{spc}({\V})$, it is enough to  identify the entrance values of critical cells of $\KK$ in the restricted filtration $\KK_L$. To this end, we introduce the following notation.

For every point  $u$ in $\RR^n$, let $S_+(u)$ be the positive cone with vertex $u$: 
$$S_+(u)=\{v\in\RR^n:u\preceq v\}$$ 
The boundary of the positive cone, $\partial S_+(u)$, decomposes into open faces.  In particular, $\partial S_+(u)$ can be partitioned by non-empty subsets $A$ of $[n]=\{1,2,\dots,n\}$ in the following way.  For $\varnothing\neq A\subseteq[n]$, define
\[
S_A(u)=\{(x_1,\dots,x_n)\in\RR^n| x_i=u_i \textrm { for }i\in A, x_j>u_j \textrm{ for }j\notin A\}.
\]
Then, for $A\neq B\subseteq[n]$, $S_A\cap S_B=\varnothing,$ and $\partial S_+(u)=\displaystyle\bigcup_{\varnothing\neq A\subseteq[n]}S_A.$

\begin{example}
If $n=2$ and $u=(u_1,u_2)$, the open faces of $\partial S_+(u)$ consist of the vertex $u$ and the two half-lines exiting from $u$ rightwards and upwards, respectively as shown in Figure \ref{fig:positive_cone}.

\[
\begin{split}
\partial S_+(u)&=S_{\{1,2\}}(u)\cup S_{\{1\}}(u)\cup S_{\{2\}}(u)\\
&=\{(u_1,u_2)\}\cup\{(x_1,x_2)\in\RR^2|x_1=u_1, x_2>u_2\}\\
&\quad \cup\{(x_1,x_2)\in\RR^2|x_1>u_1, x_2=u_2\}.
\end{split}
\]

\begin{figure}[H]
\begin{center}
\begin{tikzpicture}
\input{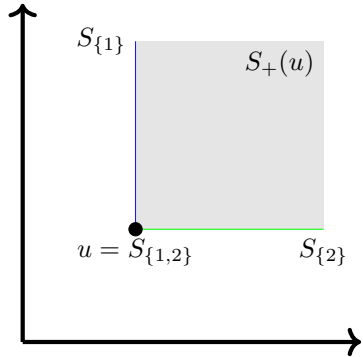}
\end{tikzpicture}
\caption{The positive cone $S_+(u)$ of $u\in\RR^2$ and the decomposition of its boundary into $S_{\{1\}}$, $S_{\{2\}}$ and $S_{\{1,2\}}$, which correspond respectively to the vertical boundary, horizontal boundary, and $u$.}\label{fig:positive_cone}
\end{center}
\end{figure}
\end{example}

It will be useful to consider the projection of points in the parameter space onto lines with positive slope (cf. \cite{Landi2018}).

\begin{definition}\label{def:push}
Given a line  $L\subseteq \RR^n$ with positive slope, for every $u\in\RR^n$ define
$$\mathrm{push}_L(u):=L\cap \partial S_+(u).$$ 
\end{definition}

\begin{figure}[H]
\begin{center}
\begin{tikzpicture}
\input{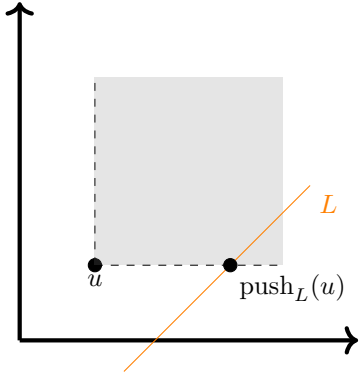}
\end{tikzpicture}
\caption{The push of $u$ along the line $L$. }\label{fig:push}
\end{center}
\end{figure}

\begin{proposition}[Properties of $\mathrm{push}_L(u)$]\label{prop:push-properties}
Some properties of $\mathrm{push}_L(u)$ are (see also Figure \ref{fig:push}): 

 \begin{enumerate}
 \item\label{push:1} $\mathrm{push}_L(u)$ consists of exactly one point because $L$ has positive slope. 
 \item\label{push:2} There is a unique non-empty subset $A^L_u$ of $[n]$  such that 
 $$ \mathrm{push}_L(u)=L\cap  S_{A^L_u}(u)$$
 For ease of notation, we concisely write $S_L(u)$ meaning $S_{A^L_u}(u)$. 
\item\label{push:3} $u\preceq \mathrm{push}_L(u)$ with equality only when $u\in L$.
\item\label{push:4} $ \mathrm{push}_L(u)$ is the smallest point on $L$ which is greater than or equal to $u$; smaller points on $L$ are either incomparable or less than $u$.
\item\label{push:6}  If $u\preceq v$, then $\mathrm{push}_L(u)\preceq\mathrm{push}_L(v)$.

\item\label{push:7} Let $u\preceq v$. Let $\varnothing\neq A,B\subseteq[n]$ such that $\mathrm{push}_L(u)\in S_A(u)$ and $\mathrm{push}_L(v)\in S_B(v)$.  We have:
\begin{enumerate}
\item[(a)] $S_A(u)\cap S_B(v)\ne \emptyset$ implies that $A\subseteq B$.

\item[(b)]  $\mathrm{push}_L(u)=\mathrm{push}_L(v)$ if and only if $S_A(u)\cap S_B(v)\ne \emptyset$.

\end{enumerate}
 \end{enumerate}
\end{proposition}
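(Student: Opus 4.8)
The plan is to collapse every claim onto a single scalar quantity by parametrizing the line. Fix a parametrization $\ell(t) = \vec m t + u_0$ of $L$ with $\vec 0 \prec \vec m$, so that every $m_i > 0$. For each $u \in \RR^n$ and each coordinate $i$, set $t_i(u) = (u_i - (u_0)_i)/m_i$, and let $t^*(u) = \max_i t_i(u)$. Since $m_i > 0$, the inequality $\ell_i(t) \ge u_i$ is equivalent to $t \ge t_i(u)$, so $\ell(t) \succeq u$ holds exactly when $t \ge t^*(u)$; moreover $t \mapsto \ell(t)$ is strictly order-preserving because every $m_i > 0$. These two facts are the engine of the whole proposition.

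For part (\ref{push:1}), I observe that $\ell(t) \in \partial S_+(u)$ means $\ell(t)\succeq u$ with at least one coordinate equal to the corresponding $u_i$; by the above this forces $t = t^*(u)$, and conversely $\ell(t^*(u))$ does lie on the boundary, since the coordinate attaining the maximum equals $u_i$ there. Hence $\push_L(u)$ is the single point $\ell(t^*(u))$. For part (\ref{push:2}) I take $A^L_u = \{i : t_i(u) = t^*(u)\}$, which is non-empty since the maximum is attained; then $\push_L(u) \in S_{A^L_u}(u)$, and uniqueness is immediate because the sets $S_A(u)$ partition $\partial S_+(u)$. Parts (\ref{push:3}) and (\ref{push:4}) are then read off from $\push_L(u) = \ell(t^*(u))$: membership in $S_+(u)$ gives $u \preceq \push_L(u)$, with equality precisely when $u = \ell(t^*(u)) \in L$; and since the points of $L$ lying in $S_+(u)$ are exactly the $\ell(t)$ with $t \ge t^*(u)$, minimality in the partial order follows from order-preservation of $\ell$, while points with $t < t^*(u)$ fail to be $\succeq u$ and hence are incomparable to or smaller than $u$. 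For part (\ref{push:6}), $u \preceq v$ gives $t_i(u) \le t_i(v)$ for every $i$, hence $t^*(u) \le t^*(v)$, and order-preservation of $\ell$ yields $\push_L(u) \preceq \push_L(v)$.

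The only part requiring more care is (\ref{push:7}). For (a), I take any $w \in S_A(u) \cap S_B(v)$ and argue coordinatewise: for $i \in A$ we have $w_i = u_i \le v_i$, so $i \notin B$ would force $w_i > v_i$, a contradiction; hence $A \subseteq B$. For (b), the forward direction is immediate, since if $\push_L(u) = \push_L(v)$ then this common point lies in both $S_A(u)$ and $S_B(v)$ by definition of $A = A^L_u$ and $B = A^L_v$. The reverse direction is where I expect the main (mild) obstacle: given $w \in S_A(u) \cap S_B(v)$, I first invoke (a) to get $A \subseteq B$, so that for any $i \in A$ the point $w$ satisfies both $w_i = u_i$ and $w_i = v_i$, whence $u_i = v_i$. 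Since $i \in A$ gives $u_i = \ell_i(t^*(u))$ and $i \in B$ gives $v_i = \ell_i(t^*(v))$, I obtain $\ell_i(t^*(u)) = \ell_i(t^*(v))$; strict monotonicity of the single coordinate $\ell_i$ (using $m_i > 0$, and $A \neq \varnothing$ to guarantee such an $i$ exists) then forces $t^*(u) = t^*(v)$, i.e. $\push_L(u) = \push_L(v)$. The whole proposition thus rests on the reduction to $t^*$ together with the strict positivity of the slope.
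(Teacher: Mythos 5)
Your proof is correct, and it takes a more systematic route than the paper's. The paper declares parts (\ref{push:1})--(\ref{push:4}) immediate and only writes out (\ref{push:6}) and (\ref{push:7}), arguing directly with coordinates: its two key facts are that two points on a positive-slope line are either equal or strictly ordered in \emph{every} coordinate, and that $\push_L(u)$ agrees with $u$ in at least one coordinate; part (\ref{push:6}) is then proved by contradiction, and (\ref{push:7})(b) by combining (\ref{push:7})(a) with the equal-or-strictly-ordered dichotomy. Your reduction to the scalar $t^*(u)=\max_i t_i(u)$ packages exactly these two facts into the strict monotonicity of $t\mapsto\ell(t)$ and the attainment of the maximum, but it buys several things the paper does not provide: genuine proofs of the ``immediate'' parts (\ref{push:1})--(\ref{push:4}), an explicit formula $A^L_u=\{i : t_i(u)=t^*(u)\}$ for the face in part (\ref{push:2}), and a direct (rather than by-contradiction) proof of monotonicity in part (\ref{push:6}) via $t^*(u)\le t^*(v)$. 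Your coordinatewise argument for (\ref{push:7})(a) is essentially the contrapositive form of the paper's, and your treatment of (\ref{push:7})(b) --- deducing $u_i=v_i$ on $A\subseteq B$ and then pinning down $t^*(u)=t^*(v)$ by strict monotonicity of a single coordinate $\ell_i$ --- is the parametrized counterpart of the paper's appeal to the dichotomy on the line. In short: same underlying geometry (positivity of the slope), but your single-parameter formulation is more uniform and constructive, at the cost of some notation.
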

\begin{proof}
Properties \ref{push:1}, \ref{push:2}, \ref{push:3} and \ref{push:4} are immediate.  
\\

\noindent \textbf{Proof of property \ref{push:6}:} Suppose not; then $u\preceq v$ and $\mathrm{push}_L(v)\prec\mathrm{push}_L(u)$.  Note that, since both $\mathrm{push}_L(u)$ and $\mathrm{push}_L(v)$ are points on $L$ with positive slope, $\mathrm{push}_L(v)\prec\mathrm{push}_L(u)$ if and only if each coordinate is strictly less than, i.e., $(\mathrm{push}_L(v))_i<(\mathrm{push}_L(u))_i$ for all $i\in [n]$. Also, since $L$ has positive slope, there must exist at least one $j\in[n]$ such that $u_j=(\mathrm{push}_L(u))_j$.  Combining these, we have 
\[ 
v_j\leq(\mathrm{push}_L(v))_j<(\mathrm{push}_L(u))_j=u_j
\]
which contradicts $u\preceq v$.  So, the claim holds.
\\

\noindent \textbf{Proof of property \ref{push:7}(a):} Suppose there exists $y\in S_A(u)\cap S_B(v)$.  By definition, $y_i>v_i\geq u_i$ for all $i\notin B$; $y_i>u_i$ implies that $i\notin A$.  Thus, $i\notin B$ implies $i\notin A$, and the contrapositive must also be true, $j\in A$ implies $j\in B$.  

Note that the converse is not necessarily true; one could have $A=B=\{1\}$ but $u_1<v_1$, so that $S_A(u)\cap S_B(v)= \emptyset$.
\\

\noindent \textbf{Proof of property \ref{push:7}(b):} Indeed, $\mathrm{push}_L(u)=\mathrm{push}_L(v)$ implies that $S_A(u)\cap S_B(v)\neq\varnothing$.  
  Now, suppose  $S_A(u)\cap S_B(v)\neq\varnothing$.  By Property \ref{push:7}(a), this implies that $A\subseteq B$, and $u_i=v_i$ for $i\in A$.  Now, since $\mathrm{push}_L(u)$ and $\mathrm{push}_L(v)$ both belong to $L$, a line with positive slope, either $\mathrm{push}_L(u)=\mathrm{push}_L(v)$ or $\mathrm{push}_L(u)_j<\mathrm{push}_L(v)_j$ for all $j\in[n]$.  And, since $\mathrm{push}_L(u)\in S_A(u)$ and  $\mathrm{push}_L(v)\in S_A(v)$, $\mathrm{push}_L(u)_j=u_j=v_j=\mathrm{push}_L(v)_j$ for all $j\in A$.  Therefore, $\mathrm{push}_L(u)=\mathrm{push}_L(v)$.
  
\end{proof}

Recall the notations of Theorem \ref{thm:rankinv} where a single bar on some value $u\in\RR^n$, for which $\{u' \in \closure{C} | u'\preceq u\}$ is non-empty, denotes the greatest value in $\closure C$ less than or equal to that value:
\[\bar u:= \max \{u' \in \closure{C} | u'\preceq u\}.\]

 We also introduce a double bar notation that depends on a given line $L$ with positive slope (see also Figure \ref{fig:u_bar_bar}):
\[\bar{\bar u}^L:= \max \{u' \in \closure{C} |  u\preceq u'\textrm{ and } S_{L}(u)\cap S_{L}(u')\neq\varnothing \}.\]

\begin{figure}[H]
\begin{center}
\begin{tikzpicture}
\input{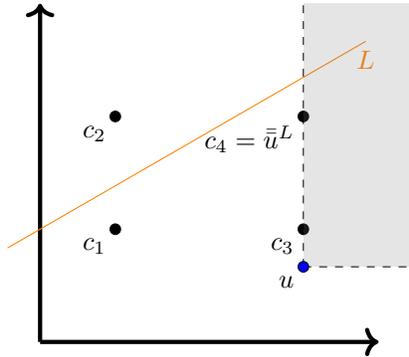}
\end{tikzpicture}
\caption{ The double bar of $u$ with respect to the line $L$ is the value $c_4 \in\overline{C}$. In this case we also have $\bar{\bar{c}}_3^L = \bar{\bar{u}}^L = c_4$. }\label{fig:u_bar_bar}
\end{center}
\end{figure}

\begin{lemma}\label{lem:double-bar}
For every $ u\in \closure{C}$,  it holds that 
$$\overline{\mathrm{push}_L( u)}=\bar{\bar u}^L.$$
\end{lemma}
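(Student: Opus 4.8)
The plan is to set $p := \push_L(u)$ and to show that both sides equal the maximum of essentially the same subset of $\closure C$, namely $\bar p$ itself. First I would rewrite the defining condition of $\bar{\bar u}^L$ in terms of pushes. For $u\preceq u'$, Property \ref{push:7}(b) says that $S_L(u)\cap S_L(u')\neq\varnothing$ if and only if $\push_L(u')=\push_L(u)=p$. Hence the double-bar set is $R:=\{u'\in\closure C\mid u\preceq u'\text{ and }\push_L(u')=p\}$ and $\bar{\bar u}^L=\max R$, while $\overline{\push_L(u)}=\bar p$ is the maximum of $\ell:=\{u'\in\closure C\mid u'\preceq p\}$. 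So the lemma reduces to showing $\max R=\max\ell$.

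Next I would verify the maxima exist and split the problem into two containments. Since $C$ is finite, $\closure C$ is a finite set closed under least upper bound, and since $u\in\closure C$ with $u\preceq\push_L(u)=p$ by Property \ref{push:3}, the point $u$ lies in both $\ell$ and $R$; both sets are therefore non-empty, and Lemma \ref{lem:max} yields $\bar p=\max\ell$. It then suffices to prove (i) $R\subseteq\ell$ and (ii) $\bar p\in R$. Containment (i) is immediate: any $u'\in R$ satisfies $u'\preceq\push_L(u')=p$ by Property \ref{push:3}, so $u'\in\ell$.

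The only real work is (ii), namely that $\bar p$ satisfies $u\preceq\bar p$ and $\push_L(\bar p)=p$. Because $u\in\ell$ and $\bar p=\max\ell$ we get $u\preceq\bar p$, and because $\bar p\in\ell$ we have $\bar p\preceq p$, giving the chain $u\preceq\bar p\preceq p$. I would then pin down $\push_L(\bar p)$ by a two-sided comparison using monotonicity of the push (Property \ref{push:6}): from $u\preceq\bar p$ we obtain $p=\push_L(u)\preceq\push_L(\bar p)$, and from $\bar p\preceq p$ together with $\push_L(p)=p$ (Property \ref{push:4}, since $p\in L$) we obtain $\push_L(\bar p)\preceq p$. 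Hence $\push_L(\bar p)=p$ and $\bar p\in R$. Combining (i) and (ii), every element of $R$ lies in $\ell$ and is therefore $\preceq\bar p$, while $\bar p\in R$; thus $\bar p=\max R=\bar{\bar u}^L$, which is exactly $\overline{\push_L(u)}$. The main obstacle, modest as it is, is step (ii): rather than computing $\push_L(\bar p)$ directly one must squeeze it between $p$ and $p$, which relies precisely on push being order-preserving and fixing points already lying on $L$.
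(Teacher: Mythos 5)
Your proof is correct and follows essentially the same route as the paper's: both translate the double-bar condition into the statement $\push_L(u')=\push_L(u)$ via Property \ref{push:7}(b), and both pin down the relevant pushes by the squeeze $\push_L(u)\preceq \push_L(\cdot)\preceq \push_L(\push_L(u))=\push_L(u)$ using Properties \ref{push:3} and \ref{push:6}. The only cosmetic difference is that the paper shows the two defining sets coincide outright (after restricting the bar-set to elements $\succeq u$), whereas you compare their maxima via the containment $R\subseteq\ell$ together with $\bar p\in R$.
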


\begin{proof}
First we note that for all $u\in\overline{C}$, using notation from Theorem \ref{thm:rankinv},

\[
\begin{split}
\overline{ \mathrm{push}_L( u)} &= \max \{u' \in \closure{C} | u'\preceq \mathrm{push}_L( u)\}\\
&=\max \{u' \in \closure{C} | u\preceq u'\preceq \mathrm{push}_L( u)\}\\
\end{split}
\]
as $u\in\overline{C}$, and $u\preceq\mathrm{push}_L(u)$.  Note that $u\preceq u'$ implies $ \mathrm{push}_L( u) \preceq\mathrm{push}_L( u')$, and $ u'\preceq \mathrm{push}_L( u)$ implies that $ \mathrm{push}_L(  u')\preceq  \mathrm{push}_L( \mathrm{push}_L( u))= \mathrm{push}_L( u)$.  So, we may write 

\[
\begin{split}
\overline{ \mathrm{push}_L( u)} &= \max \{u' \in \closure{C} |  u\preceq u' \textrm{ and }\mathrm{push}_L( u) =\mathrm{push}_L( u')\}.\\
\end{split}
\]

By definition of  $\varnothing\neq A^L_u,A^L_{u'}\subseteq[n]$ as the unique subsets $\varnothing\neq A^L_u,A^L_{u'}\subseteq[n]$ such that $\mathrm{push}_L(u)\in S_{A^L_u}(u)$ and $\mathrm{push}_L(u)\in S_{A^L_{u'}}(u')$.
Hence, $u\preceq u'$ and $ \mathrm{push}_L( u) =\mathrm{push}_L( u')$ if and only if $S_{A^L_u}(u)\cap S_{ A^L_{u'}}(u')\neq\varnothing $. So, finally we obtain, 

\[
\begin{split}
\overline{ \mathrm{push}_L( u)} &=\max \{u' \in \closure{C} |  u\preceq u'\textrm{ and } S_{A^L_u}(u)\cap S_{ A^L_{u'}}(u')\neq\varnothing  \}\\
&=\bar{\bar u}^L.
\end{split}
\]
that yields the claim recalling that the notation $S_L(u)$  is a shorthand for $S_{A^L_u}(u)$. 
\end{proof}

\begin{lemma}\label{lem:push-doublebar}
For all lines $L$ with positive slope, and for all $ u\prec  v\in \closure{C}$, we have 
$$\rho_\V(\mathrm{push}_L( u),\mathrm{push}_L( v))=\rho_\V(\bar{\bar u}^L,\bar{\bar v}^L).$$
\end{lemma}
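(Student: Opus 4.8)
The plan is to reduce the claim to the previous lemma (Lemma~\ref{lem:double-bar}) combined with the rank-invariant formula from Theorem~\ref{thm:rankinv}. The key observation is that $\mathrm{push}_L(u)$ and $\mathrm{push}_L(v)$ are ordinary points of $\RR^n$, so Theorem~\ref{thm:rankinv} applies to the pair $(\mathrm{push}_L(u),\mathrm{push}_L(v))$ directly, provided we can check the hypotheses and the ordering.

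\begin{proof}
Let $L$ be a line with positive slope and let $u\prec v\in\closure C$. By Property~\ref{push:6} of Proposition~\ref{prop:push-properties}, $u\preceq v$ implies $\mathrm{push}_L(u)\preceq\mathrm{push}_L(v)$, so the pair $(\mathrm{push}_L(u),\mathrm{push}_L(v))$ lies in $\mathcal H^n$ and the rank invariant $\rho_\V(\mathrm{push}_L(u),\mathrm{push}_L(v))$ is defined. Since $u\in\closure C$ and $u\preceq\mathrm{push}_L(u)$ by Property~\ref{push:3}, the set $\{u'\in\closure C\mid u'\preceq\mathrm{push}_L(u)\}$ contains $u$ and is therefore non-empty; likewise $\{v'\in\closure C\mid v'\preceq\mathrm{push}_L(v)\}$ contains $v$ and is non-empty. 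Hence the hypotheses of Theorem~\ref{thm:rankinv} are satisfied at the points $\mathrm{push}_L(u)$ and $\mathrm{push}_L(v)$, and applying it with these points in place of $u$ and $v$ gives
\[
\rho_\V(\mathrm{push}_L(u),\mathrm{push}_L(v))=\rho_\V\bigl(\overline{\mathrm{push}_L(u)},\overline{\mathrm{push}_L(v)}\bigr).
\]
Now apply Lemma~\ref{lem:double-bar}, which holds for every point of $\closure C$, to each of $u$ and $v$ separately: it yields $\overline{\mathrm{push}_L(u)}=\bar{\bar u}^L$ and $\overline{\mathrm{push}_L(v)}=\bar{\bar v}^L$. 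Substituting these equalities into the displayed identity gives
\[
\rho_\V(\mathrm{push}_L(u),\mathrm{push}_L(v))=\rho_\V(\bar{\bar u}^L,\bar{\bar v}^L),
\]
which is the claim.
\end{proof}

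The proof is essentially a bookkeeping composition of two earlier results, so no genuinely hard step arises; the only point demanding care is verifying that the hypotheses of Theorem~\ref{thm:rankinv} are met at the pushed points rather than at $u,v$ themselves, and that the monotonicity of $\mathrm{push}_L$ places the pair in $\mathcal H^n$ so that the rank invariant is even defined. I would double-check that Lemma~\ref{lem:double-bar} is genuinely stated for all $u\in\closure C$ (it is) so that it can be invoked for both $u$ and $v$ without reproving anything.
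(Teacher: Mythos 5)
Your proof is correct and follows exactly the same route as the paper's: apply Theorem~\ref{thm:rankinv} to the pair $(\mathrm{push}_L(u),\mathrm{push}_L(v))$ and then rewrite the bars via Lemma~\ref{lem:double-bar}. The only difference is that you explicitly verify the hypotheses (monotonicity of $\mathrm{push}_L$ and non-emptiness of $\{u'\in\closure C \mid u'\preceq \mathrm{push}_L(u)\}$), which the paper leaves implicit.
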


\begin{proof}
Using Theorem \ref{thm:rankinv} and Lemma \ref{lem:double-bar}, we obtain
\[
\begin{split}
\rho_\V(\mathrm{push}_L( u),\mathrm{push}_L( v))&=\rho_\V(\overline{\mathrm{push}_L( u)},\overline{\mathrm{push}_L( v)})\\
&=\rho_\V(\bar{\bar u}^L,\bar{\bar v}^L)
\end{split}
\]
\end{proof}

\begin{lemma}\label{lem:bar=barbar}
For all lines $L$ with positive slope, and all $u\in L$, we have 
$$\bar u=\overline{\overline {(\bar u)}}^L.$$
\end{lemma}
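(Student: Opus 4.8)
The plan is to reduce everything to Lemma~\ref{lem:double-bar} and then exploit monotonicity of the single-bar operation. Write $w := \bar u$, which lies in $\closure{C}$ by the definition of the single bar (assuming, as the statement implicitly requires, that $\{u' \in \closure{C} \mid u' \preceq u\}$ is non-empty). Since $w \in \closure{C}$, Lemma~\ref{lem:double-bar} applies to $w$ and gives $\overline{\overline{(\bar u)}}^L = \bar{\bar w}^L = \overline{\mathrm{push}_L(w)}$. Thus it suffices to prove the single identity $\overline{\mathrm{push}_L(w)} = w$.

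The key geometric observation is the chain $w \preceq \mathrm{push}_L(w) \preceq u$. The left inequality is Property~\ref{push:3} of Proposition~\ref{prop:push-properties}. For the right inequality, recall from Property~\ref{push:4} that $\mathrm{push}_L(w)$ is the smallest point of $L$ that is $\succeq w$; since $u \in L$ and $w = \bar u \preceq u$, the point $u$ is itself a point of $L$ lying above $w$, and therefore $\mathrm{push}_L(w) \preceq u$.

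From this chain I would squeeze $\overline{\mathrm{push}_L(w)}$ between $w$ and $\bar u = w$. On one hand, $w \in \closure{C}$ and $w \preceq \mathrm{push}_L(w)$, so $w$ belongs to the set $\{u' \in \closure{C} \mid u' \preceq \mathrm{push}_L(w)\}$ whose maximum is $\overline{\mathrm{push}_L(w)}$; hence $w \preceq \overline{\mathrm{push}_L(w)}$. On the other hand, $\mathrm{push}_L(w) \preceq u$ forces the inclusion of sets $\{u' \in \closure{C} \mid u' \preceq \mathrm{push}_L(w)\} \subseteq \{u' \in \closure{C} \mid u' \preceq u\}$. Both of these sets are finite and closed under least upper bound (because $\closure{C}$ is), so by Lemma~\ref{lem:max} each admits a unique maximum; the maximum of the smaller set then lies in the larger set, which yields $\overline{\mathrm{push}_L(w)} \preceq \bar u = w$. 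Combining the two inequalities gives $\overline{\mathrm{push}_L(w)} = w$, which is exactly the claim.

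I do not expect a serious obstacle: the argument is essentially monotonicity of the single-bar operation together with the defining property of $\mathrm{push}_L$. The only point requiring care is verifying that $\{u' \in \closure{C} \mid u' \preceq x\}$ is closed under least upper bound, so that Lemma~\ref{lem:max} legitimately produces the unique maxima being compared; this is immediate, since $\closure{C}$ is closed under least upper bound and the condition $u' \preceq x$ is preserved under taking least upper bounds.
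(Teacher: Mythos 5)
Your proof is correct, and it is routed differently from the paper's. You front-load Lemma~\ref{lem:double-bar}: since $w=\bar u\in\closure{C}$, that lemma rewrites $\overline{\overline{(\bar u)}}^L$ as $\overline{\push_L(w)}$, and the rest of your argument is purely order-theoretic --- the chain $w\preceq\push_L(w)\preceq u$ (Properties~\ref{push:3} and~\ref{push:4} of Proposition~\ref{prop:push-properties}) together with monotonicity of the single-bar operation gives the squeeze $w\preceq\overline{\push_L(w)}\preceq\bar u=w$. The paper does not use Lemma~\ref{lem:double-bar} at all here: it works directly from the definition of the double bar, invoking Proposition~\ref{prop:push-properties}.\ref{push:7}(b) to obtain $\push_L(\bar u)=\push_L\left(\overline{\overline{(\bar u)}}^L\right)$, and then Properties~\ref{push:3} and~\ref{push:6} to build the chain
$$\bar u \preceq \overline{\overline{(\bar u)}}^L\preceq\push_L\left(\overline{\overline{(\bar u)}}^L\right)=\push_L(\bar u)\preceq \push_L(u)=u,$$
concluding by maximality of $\bar u$ in $\{u'\in\closure{C}\mid u'\preceq u\}$. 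Both proofs close with the same squeeze-plus-maximality step, so the difference is in the decomposition: your version is more modular, since the cone-face geometry of Property~\ref{push:7}(b) is consumed exactly once (inside Lemma~\ref{lem:double-bar}) rather than re-run, whereas the paper's version is self-contained relative to Proposition~\ref{prop:push-properties} and does not depend on Lemma~\ref{lem:double-bar}. One small remark: your worry about the sets $\{u'\in\closure{C}\mid u'\preceq x\}$ being closed under least upper bounds is unnecessary, since Lemma~\ref{lem:max} is stated for $U=\closure{C}$ and already delivers a unique maximum of every such subset; the extra check is harmless but not needed.
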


\begin{proof}

By definition of double bar, $\bar u \preceq \overline{\overline{(\bar u)}}^L$ and $S_L(\bar u)\cap S_L(\overline{\overline {(\bar u)}}^L)\neq\varnothing$. So, by Proposition \ref{prop:push-properties}.\ref{push:7}(b),   
$$\mathrm{push}_L(\bar u)=\mathrm{push}_L\left(\overline{\overline{(\bar u)}}^L\right).$$ 
Additionally, Proposition \ref{prop:push-properties}.\ref{push:3} implies that $\overline{\overline{(\bar u)}}^L\preceq\mathrm{push}_L\left(\overline{\overline{(\bar u)}}^L\right)$, and, by Proposition \ref{prop:push-properties}.\ref{push:6}, we have  $\mathrm{push}_L(\bar u)\preceq \mathrm{push}_L(u)$ because $\bar u\preceq u$. 
Moreover, as $u\in L$, $\mathrm{push}_L(u)=u$. So finally we have
$$\bar u \preceq \overline{\overline{(\bar u)}}^L\preceq\mathrm{push}_L\left(\overline{\overline{(\bar u)}}^L\right)=\mathrm{push}_L(\bar u)\preceq \mathrm{push}_L(u)=u.$$
As $\overline{\overline{(\bar u)}}^L \in \overline{C}$,   the above ineqalities imply that $\bar u = \overline{\overline{(\bar u)}}^L$ by definition of $\bar u$.

\end{proof}

Our next goal is to prove that critical values of the discrete vector field on $K$ determine points of the persistence diagram of the restriction along a line through the parameter space.  Given such a line $L$, we may define   

\[\mathrm{push}_L( \closure{C})=\{\mathrm{push}_L(c)\ |\ c\in\closure{C}\}.
\]

Note that, since $\closure{C}$ is finite, so is $\mathrm{push}_L( \closure{C})$.  We can order the elements of $\mathrm{push}_L( \closure{C})$ as $c^1,c^2,\dots,c^m$, with $c^i\prec c^{i+1}$. 

%
%
%


\begin{proposition}\label{prop:fiber}
Let $L$ be a line  with positive slope. Let $\mathrm{push}_L( \closure{C})=\{c^1,c^2,\ldots, c^m\}$ be increasingly ordered. For all points $u\prec v$ on $L$, it holds that:
\begin{enumerate}
\item[(i)] If $u=c^i$ and $v=c^j$, then
\[
\mu_\V(u,v)=\rank_\V(c^i,c^{j-1})-\rank_\V(c^{i-1},c^{j-1})-\rank_\V(c^i,c^{j})+\rank_\V(c^{i-1},c^{j}),
\]
and $\mu_\V(u,v)=0$ if $u$ or $v$ not in $\closure{C}$.

\item[(ii)] If $u=c^i$, then 
\[
\mu_\V(u,\infty)=\rank_\V(c^i,c^{m})-\rank_\V(c^{i-1},c^{m}),
\]
and $\mu_\V(u,\infty)=0$ if $u$ not in $\closure{C}$.
\end{enumerate}
\end{proposition}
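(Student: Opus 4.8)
The plan is to reduce the computation of $\mu_\V$ on the line $L$ to an alternating sum of four rank values, evaluate those ranks by combining Theorem \ref{thm:rankinv} with a constancy property of the map $w\mapsto \bar w$ along $L$, and then read off the two cases. Since $u$ and $v$ lie on $L$ and $L$ has positive slope, its velocity vector satisfies $\vec m\succ 0$, so I would take the perturbation $\vec e=\epsilon\,\vec m$ with $\epsilon>0$. This is an admissible direction in the definition of $\mu_\V(u,v)$, and by the remark following that definition (the alternating rank sums decrease as the length of $\vec e$ decreases), the minimum is attained for all sufficiently small $\epsilon$, in particular small enough that $u+\vec e\prec v-\vec e$. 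Hence for such $\epsilon$
\[
\mu_\V(u,v)=\rho_\V(u+\vec e,v-\vec e)-\rho_\V(u-\vec e,v-\vec e)-\rho_\V(u+\vec e,v+\vec e)+\rho_\V(u-\vec e,v+\vec e),
\]
with all four evaluation points lying on $L$.

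The key step is a constancy lemma for $w\mapsto\bar w$ along $L$. For $w\in L$ and $c'\in\closure{C}$ one has $c'\preceq w$ if and only if $\push_L(c')\preceq w$, since $\push_L(c')$ is the least point of $L$ above $c'$ (Proposition \ref{prop:push-properties}.\ref{push:4}); therefore $\bar w=\max\{c'\in\closure{C}\mid \push_L(c')\preceq w\}$. Because the points of $\push_L(\closure{C})=\{c^1\prec\dots\prec c^m\}$ are totally ordered on $L$, the indexing set $\{c'\mid\push_L(c')\preceq w\}$ changes only as $w$ crosses one of the $c^k$. Consequently $\bar w$ is constant equal to $\overline{c^i}$ for $w\in[c^i,c^{i+1})$, constant equal to $\overline{c^m}$ for $w\succeq c^m$, and $\{c'\in\closure{C}\mid c'\preceq w\}=\varnothing$ for $w\prec c^1$ (so that $\rho_\V(w,\cdot)=0$ there by Theorem \ref{thm:rankinv}).

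With this in hand, for small $\epsilon$ the perturbed points fall into consecutive intervals, giving $\overline{c^i+\vec e}=\overline{c^i}$ and $\overline{c^i-\vec e}=\overline{c^{i-1}}$, where $c^0$ is read as the rank-zero regime $w\prec c^1$. Applying Theorem \ref{thm:rankinv} to each of the four terms, and using that $\rho_\V(c^i,c^{j-1})=\rho_\V(\overline{c^i},\overline{c^{j-1}})$ and likewise for the other pairs, the substitution collapses precisely to
\[
\mu_\V(c^i,c^j)=\rho_\V(c^i,c^{j-1})-\rho_\V(c^{i-1},c^{j-1})-\rho_\V(c^i,c^j)+\rho_\V(c^{i-1},c^j),
\]
which is part (i). If instead $u\notin\push_L(\closure{C})$, then $u$ lies in the interior of some interval (or below $c^1$ or above $c^m$), so $\overline{u+\vec e}=\overline{u-\vec e}$ for small $\epsilon$; the first two and the last two terms then cancel in pairs, giving $\mu_\V(u,v)=0$, and the symmetric argument applied to $v$ settles the remaining ``otherwise'' case. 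For part (ii), by the remark following the definition of $\mu_\V(u,\infty)$ the minimum is attained for increasing $v$, so I would take $v\succeq c^m$ on $L$ (whence $\overline{v}=\overline{c^m}$) together with $\vec e=\epsilon\,\vec m$; the defining difference becomes $\rho_\V(u+\vec e,v)-\rho_\V(u-\vec e,v)$, and the same evaluation of $\overline{u\pm\vec e}$ yields $\rho_\V(c^i,c^m)-\rho_\V(c^{i-1},c^m)$ when $u=c^i$ and $0$ otherwise.

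The main obstacle I anticipate is the bookkeeping in the constancy lemma: making precise that $u\pm\epsilon\vec m$ land in the correct half-open intervals for all small $\epsilon$, and that the two minima are genuinely \emph{attained} (at small $\epsilon$, respectively at large $v$) rather than merely approached. Both rely on $\push_L(\closure{C})$ being finite, so that the gaps between consecutive $c^k$ are bounded below and a single threshold on $\epsilon$ works simultaneously for all four evaluation points.
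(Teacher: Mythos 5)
Your proof is correct and takes essentially the same route as the paper's: fix the perturbation direction along $L$, show that $w\mapsto\overline{w}$ is constant on the half-open intervals of $L$ cut out by the points of $\push_L(\closure{C})$ (the paper proves this as the claim $\overline{u}=\overline{c^i}$ for $c^i$ maximal below $u$, and handles $u\prec c^1$ as your ``rank-zero regime''), then collapse the four-term alternating sum via Theorem \ref{thm:rankinv}, with pairwise cancellation giving the ``otherwise'' cases. The only differences are cosmetic: you derive the constancy lemma from the equivalence $c'\preceq w \iff \push_L(c')\preceq w$ for $w\in L$ (via Proposition \ref{prop:push-properties}) rather than the paper's maximality argument, and your part (ii) states the difference $\rank_\V(c^i,c^m)-\rank_\V(c^{i-1},c^m)$ in the correct order, whereas the paper's final displayed lines inadvertently swap the two terms.
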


\begin{proof}
From $\closure{C}\ne \emptyset$ we get $\mathrm{push}_L( \closure{C})\ne \emptyset$. Note that we may partition the line $L$ by points of $\mathrm{push}_L( \closure{C})$. For each $c^i\in \mathrm{push}_L( \closure{C})$, we have 
$$c^i=\mathrm{push}_L( \overline{c^i}).$$
Indeed, by the bar notation, $d\preceq \overline{c^i}\preceq c^i$ for all $d\in \mathrm{push}_L^{-1}( {c^i})$. Thus, 
$$c^i=\mathrm{push}_L( d)\preceq \mathrm{push}_L( \overline{c^i})\preceq \mathrm{push}_L( {c^i})=c^i.$$

We first consider the case when $u\in L$ and $u\prec c^1$. In this case,  $\mu_\V(u,v)=0$ for all $v\succeq u$, and $\mu_\V(u,\infty)=0$. Indeed, we can take $\vec e\succ 0$ small enough so that, for all $0\prec \vec{e'}\preceq\vec e$, we have $u-\vec{e'}\prec u \prec u+ \vec{e'} \prec c^1$. Hence, Theorem \ref{thm:representative-dgm}  $\rho_\V(u-\vec{e'},v)=\rho_\V(u+\vec{e'},v)=0$ for all $v\succeq u$. 

We now consider the case of a point $u\in L$ such that $c^1\preceq u$. Let $c^i$ be the maximal element in $\mathrm{push}_L( \closure{C})$ such that $c^i\preceq u$. In this case we claim that
$$\overline{u}=\overline{c^i}$$
Indeed, suppose not.  Since $c^i\preceq u$, we must have $\overline{c^i}\preceq \overline{u}$.   Therefore,
\[
c^i=\mathrm{push}_L(\overline{c^i})\preceq \mathrm{push}_L(\overline{u})\preceq \mathrm{push}_L(u)=u.
\]
Since $\mathrm{push}_L(\overline{u})$ is in $\mathrm{push}_L( \closure{C})$ and $c^i$ is maximal in $\mathrm{push}_L( \closure{C})$ such that $c^i\preceq u$ (by assumption), we must have that $c^i= \mathrm{push}_L(\overline{u})$.  But then $\overline{u}\preceq c^i$,with $\overline{u}\in \closure{C}$,  implying  $\overline{u}\preceq \overline{c^i}$ by the bar notation.   So, it must be that $\overline{u}=\overline{c^i}.$
\\

Now, we are ready to prove the first statement in the case $c^1\preceq u$.  First, suppose that $u$ is not in $\mathrm{push}_L( \closure{C})$.  Then there is a maximal element in $\mathrm{push}_L( \closure{C})$ such that $c^i\prec u$.  If $i\neq m$, there exists $\vec e\succ 0$ such that 
\[
c^i\prec u-\vec e\prec u\prec u+\vec e\prec c^{i+1};
\]
if $i=m$, then there exists $\vec e\succ 0$ such that 
\[
c^m\prec u-\vec e\prec u\prec u+\vec e,
\]
and the above inequalities hold for all $0\prec \vec{e'}\preceq\vec e$.  Moreover, by the above claim, we have that for all such $\vec{e'}$, $\overline{u-\vec{e'}}=\overline{u+\vec{e'}}=\overline{c^i}$. 

Therefore,
\[
\begin{split}
\mu_{\V}(u,v)& = \min_{\scriptsize{\begin{array}{c}
\vec e\succ  0\\
u+\vec e\prec v-\vec e\end{array}}}\rank_{\V}(u+\vec e,v-\vec e)- \rank_{\V}(u-\vec e,v-\vec e)+\\ &\qquad\qquad\qquad-\rank_{\V}(u+\vec e,v+\vec e)+\rank_{\V}(u-\vec e,v+\vec e)\\
& = \min_{\scriptsize{\begin{array}{c}
\vec e\succ  0\\
u+\vec e\prec v-\vec e\end{array}}}\rank_{\V}(\overline{u+\vec e},v-\vec e)- \rank_{\V}(\overline{u-\vec e},v-\vec e)+\\ &\qquad\qquad\qquad-\rank_{\V}(\overline{u+\vec e},v+\vec e)+\rank_{\V}(\overline{u-\vec e},v+\vec e)\\
& = \min_{\scriptsize{\begin{array}{c}
\vec e\succ  0\\
u+\vec e\prec v-\vec e\end{array}}}\rank_{\V}(\overline{c^i},v-\vec e)- \rank_{\V}(\overline{c^i},v-\vec e)+\\ &\qquad\qquad\qquad-\rank_{\V}(\overline{c^i},v+\vec e)+\rank_{\V}(\overline{c^i},v+\vec e)\\
&=0.
\end{split}
\]
Similarly, if $v$ is not in $\mathrm{push}_L( \closure{C})$, we obtain $\mu_{\V}(u,v)=0.$

Now, if $c^j=v$ and $c^i=u$ for $c^i,c^j\in \mathrm{push}_L( \closure{C})$, then we can find $\vec e\succ0$ small enough such that both
\[
c^{i-1}\prec c^i-\vec e\prec c^i \prec c^i+\vec e\prec c^{i+1}
\]
and
\[
c^{j-1}\prec c^j-\vec e\prec c^j \prec c^j+\vec e\prec c^{j+1};
\]
(note that if $j=m$, then the second set of equalities does not have the final ``$\prec c^{j+1}"$ term.)

Additionally, for all $0\prec\vec{e'}\prec\vec e$:
\begin{itemize}
\item $\overline{c^j-\vec {e'}}=\overline{c^{j-1}}$ ,
\item $\overline{c^j+\vec {e'}}=\overline{c^j}$ ,
\item $\overline{c^i-\vec {e'}}=\overline{c^{i-1}}$ , and 
\item $\overline{c^i+\vec {e'}}=\overline{c^i}$.
\end{itemize}

Using these, we find that:
\[
\begin{split}
\mu_{\V}(u,v)& = \min_{\scriptsize{\begin{array}{c}
\vec e\succ  0\\
c^i+\vec e\prec c^j-\vec e\end{array}}}\rank_{\V}(c^i+\vec e,c^j-\vec e)- \rank_{\V}(c^i-\vec e,c^j-\vec e)+\\ &\qquad\qquad\qquad-\rank_{\V}(c^i+\vec e,c^j+\vec e)+\rank_{\V}(c^i-\vec e,c^j+\vec e)\\
& = \min_{\scriptsize{\begin{array}{c}
\vec e\succ  0\\
c^i+\vec e\prec c^j-\vec e\end{array}}}\rank_{\V}(\overline{c^i+\vec e},\overline{c^j-\vec e})- \rank_{\V}(\overline{c^i-\vec e},\overline{c^j-\vec e})+\\ &\qquad\qquad\qquad-\rank_{\V}(\overline{c^i+\vec e},\overline{c^j+\vec e})+\rank_{\V}(\overline{c^i-\vec e},\overline{c^j+\vec e})\\
& = \min_{\scriptsize{\begin{array}{c}
\vec e\succ  0\\
c^i+\vec e\prec c^j-\vec e\end{array}}}\rank_{\V}(\overline{c^i},\overline{c^{j-1}})- \rank_{\V}(\overline{c^{i-1}},\overline{c^{j-1}})-\rank_{\V}(\overline{c^i},\overline{c^j})+\rank_{\V}(\overline{c^{i-1}},\overline{c^j})\\
& =\rank_{\V}(\overline{c^i},\overline{c^{j-1}})- \rank_{\V}(\overline{c^{i-1}},\overline{c^{j-1}})-\rank_{\V}(\overline{c^i},\overline{c^j})+\rank_{\V}(\overline{c^{i-1}},\overline{c^j})\\
& =\rank_{\V}(c^i,c^{j-1})- \rank_{\V}(c^{i-1},c^{j-1})-\rank_{\V}(c^i,c^j)+\rank_{\V}(c^{i-1},c^j).\\
\end{split}
\]

To prove the second statement, we again first suppose that $u$ is not in $\mathrm{push}_L( \closure{C})$, and that $c^i$ is the maximal element in $\mathrm{push}_L( \closure{C})$ such that $c^i\prec u$.  Then, as in the proof of the first statement, we can find $0\prec \vec e$ such that, for all $0\prec \vec{e'}\preceq\vec e$, $\overline{u-\vec{e'}}=\overline{u+\vec{e'}}=\overline{c^i}$. Thus, 

\[
\begin{split}
\mu_{\V}(u,\infty) &= \min_{\scriptsize{\begin{array}{c}
\vec e\succ  0\\
 v\succeq u\end{array}}}\rank_{\V}(u+\vec e,v)- \rank_{\V}(u-\vec e,v)\\
 &= \min_{\scriptsize{\begin{array}{c}
\vec e\succ  0\\
 v\succeq u\end{array}}}\rank_{\V}(\overline{u+\vec e},v)- \rank_{\V}(\overline{u-\vec e},v)\\
 &= \min_{\scriptsize{\begin{array}{c}
\vec e\succ  0\\
 v\succeq u\end{array}}}\rank_{\V}(\overline{c^i},v)- \rank_{\V}(\overline{c^i},v)\\
 &=0.
\end{split}
\]

If $u=c^i$ for some $c^i\in\mathrm{push}_L( \closure{C})$, then we can find $0\prec \vec e$ such that, for all $0\prec \vec{e'}\preceq\vec e$, $\overline{u-\vec{e'}}=\overline{c^{i-1}}$ and $\overline{u+\vec{e'}}=\overline{c^i}$.  We also note that for all $v\in L$ such that $c^m\prec v$, $\overline{v}=\overline{c^m}$.  

Thus, 

\[
\begin{split}
\mu_{\V}(u,\infty) &= \min_{\scriptsize{\begin{array}{c}
\vec e\succ  0\\
 v\succeq u\end{array}}}\rank_{\V}(u+\vec e,v)- \rank_{\V}(u-\vec e,v)\\
 &= \min_{\scriptsize{\begin{array}{c}
\vec e\succ  0\\
 v\succeq u\end{array}}}\rank_{\V}(\overline{u+\vec e},v)- \rank_{\V}(\overline{u-\vec e},v)\\
  &= \min_{\scriptsize{\begin{array}{c}
 v\succeq u\end{array}}}\rank_{\V}(\overline{c^{i-1}},v)- \rank_{\V}(\overline{c^i},v)\\
   &= \min_{\scriptsize{\begin{array}{c}
 v\succeq u\end{array}}}\rank_{\V}(c^{i-1},v)- \rank_{\V}(c^i,v)\\
    &= \min_{\scriptsize{\begin{array}{c}
 v\succeq u\end{array}}}\rank_{\V}(c^{i-1},\overline{v})- \rank_{\V}(c^i,\overline{v}).\\
     &= \min_{\scriptsize{\begin{array}{c}
 v\succeq u\end{array}}}\rank_{\V}(c^{i-1},\overline{c^m})- \rank_{\V}(c^i,\overline{c^m})\\
      &= \rank_{\V}(c^{i-1},\overline{c^m})- \rank_{\V}(c^i,\overline{c^m})\\
     &= \rank_{\V}(c^{i-1},c^m)- \rank_{\V}(c^i,c^m).\\
\end{split}
\]
\end{proof}

\subsection{Grouping fibers of persistence spaces by equivalence}

We now use critical values to partition the set of all lines of $\RR^n$ into equivalence classes, as illustrated in Figure \ref{fig:line_eq}, such that the persistence diagrams of the restriction along lines in the same class are  easily obtainable from each other by a bijective correspondence.

\begin{definition}\label{def:rec-position} 
Two lines  $L,L'\subseteq \RR^n$ with positive slope are said to have the {\em same reciprocal position} with respect to $u$ if and only if $\mathrm{push}_L(u)$ and $\mathrm{push}_{L'}(u)$ belong to the same open face of $\partial S_+(u)$. Given a non-empty subset $U$ of $\RR^n$, we write $L\sim _{U}L'$, if $L$ and $L'$ have the same reciprocal position with respect to $u$ for all $u \in U$.  
\end{definition} 

\begin{example} Figure \ref{fig:line_eq} shows two examples of the equivalence classes of lines yielded by the set $\overline{C}$ of Example \ref{ex:bifiltration}.

\end{example}
\begin{figure} 
\begin{center}
\begin{tikzpicture}
\input{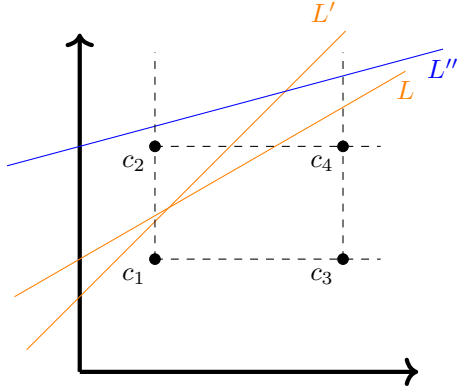}
\end{tikzpicture}
\caption{ The dashed lines represent the boundaries of the positive cones of the values in $\overline{C}$. Here $L$ and $L'$ have the same reciprocal position with respect to $\overline{C}$ but $L''$ does not. } \label{fig:line_eq}
\end{center}
\end{figure}

Lines with the same  reciprocal position with respect to $\closure{C}$ are characterized by the property of hitting the same face of the positive cone of $u$ for each $u\in \closure{C}$:

\begin{lemma}\label{lem:facets}
$L\sim _{\closure{C}}L'$ if and only if $S_{L}(u)=S_{{L'}}(u)$ for all values $u\in\closure C$. Hence, $A_u^L=A_u^{L'}$ for all values $u\in\closure C$.
\end{lemma}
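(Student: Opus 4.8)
The plan is to unwind the definitions and observe that the entire statement is a tautological consequence of one elementary fact: the assignment $A\mapsto S_A(u)$ is a bijection between the non-empty subsets of $[n]$ and the open faces of $\partial S_+(u)$. Once this bijection is in hand, the lemma is just a translation between "same open face" and "same index set".

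First I would record this bijection. The excerpt already states that the sets $S_A(u)$ partition $\partial S_+(u)$ and that $S_A(u)\cap S_B(u)=\varnothing$ whenever $A\neq B$; moreover each $S_A(u)$ is non-empty (for $A=[n]$ it is the singleton $\{u\}$, and for $A\subsetneq[n]$ one simply takes the coordinates indexed outside $A$ strictly larger than those of $u$). Hence distinct index sets yield distinct — indeed disjoint — open faces, so two points of $\partial S_+(u)$ lie in the same open face if and only if the unique index sets they determine via Proposition \ref{prop:push-properties}.\ref{push:2} coincide.

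Next I would apply this to the push points. By Proposition \ref{prop:push-properties}.\ref{push:1} and \ref{push:2}, for any line $L$ with positive slope the point $\mathrm{push}_L(u)$ is well defined and $A_u^L$ is the unique non-empty subset of $[n]$ with $\mathrm{push}_L(u)\in S_{A_u^L}(u)=S_L(u)$; likewise for $L'$. By Definition \ref{def:rec-position}, $L$ and $L'$ have the same reciprocal position with respect to $u$ precisely when $\mathrm{push}_L(u)$ and $\mathrm{push}_{L'}(u)$ lie in the same open face of $\partial S_+(u)$. By the bijection above, this happens if and only if $A_u^L=A_u^{L'}$, which in turn — again using that distinct index sets give distinct faces — is equivalent to $S_L(u)=S_{L'}(u)$.

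Finally I would quantify over $\closure{C}$. By Definition \ref{def:rec-position}, $L\sim_{\closure{C}}L'$ means that $L$ and $L'$ have the same reciprocal position with respect to every $u\in\closure{C}$, which by the previous paragraph is equivalent to $S_L(u)=S_{L'}(u)$ for all $u\in\closure{C}$, and hence also to $A_u^L=A_u^{L'}$ for all such $u$. This establishes both the equivalence and the concluding "hence" clause. I do not anticipate any genuine obstacle; the only point deserving a word of care is the verification that the open faces are pairwise distinct (equivalently, that $A\mapsto S_A(u)$ is injective), since everything else follows immediately from the definitions.
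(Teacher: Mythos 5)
Your proposal is correct and follows essentially the same route as the paper: both arguments are pure definition-unwinding, translating ``same open face of $\partial S_+(u)$'' into $S_L(u)=S_{L'}(u)$ and then into $A_u^L=A_u^{L'}$ via the injectivity of $A\mapsto S_A(u)$. If anything, you are slightly more careful than the paper, which explicitly argues only the forward implication and leaves the (trivial) converse and the non-emptiness of each $S_A(u)$ implicit.
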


\begin{proof}
Recall that $S_L(u)=S_{A_u^L}(u)$  and $A_u^L$ is the unique non-empty subset of $[n]$ such that $\mathrm{push}_L(u)=L\cap S_{A_u^L}(u)$.  Therefore, $\mathrm{push}_L(u)\in S_{A_u^L}(u)$ and $\mathrm{push}_{L'}(u)\in S_{A_u^{L'}}(u)$.

By definition, if $L\sim _{\closure{C}}L'$, then for all $u\in\closure C$, $\mathrm{push}_L(u)$ and $\mathrm{push}_{L'}(u)$ belong to the same open face of $\partial S_+(u)$, i.e. 

\[
S_{A_u^L}=S_{A_u^{L'}}
\]
 
 for all $u\in\closure C$. This can only happen if $A_u^L = A_u^{L'}$  for all $u\in\closure C$.
\end{proof}
 
 \begin{proposition}\label{prop:equiv-relation}
Given a non-empty subset $U$ in $\RR^2$, $L\sim_U L'$ defines an equivalence relation on the set of lines with positive slope. 
\end{proposition}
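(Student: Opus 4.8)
The plan is to reduce the relation $\sim_U$ to a purely combinatorial condition and then observe that it is the kernel of a function, which is automatically an equivalence relation. First I would recall from Proposition~\ref{prop:push-properties}.\ref{push:2} that for each line $L$ with positive slope and each $u\in\RR^n$ there is a \emph{unique} non-empty subset $A_u^L\subseteq[n]$ with $\mathrm{push}_L(u)\in S_{A_u^L}(u)$. Since the faces $\{S_A(u)\}_{\varnothing\neq A\subseteq[n]}$ partition $\partial S_+(u)$, two points of $\partial S_+(u)$ lie in the same open face precisely when they carry the same index set. Consequently, $L$ and $L'$ have the same reciprocal position with respect to $u$ if and only if $A_u^L=A_u^{L'}$, and therefore
\[
L\sim_U L' \iff A_u^L=A_u^{L'}\text{ for all }u\in U.
\]

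With this characterization in hand, the three axioms follow immediately from the corresponding properties of equality of subsets of $[n]$. Reflexivity holds because $A_u^L=A_u^L$ for every $u\in U$. Symmetry holds because $A_u^L=A_u^{L'}$ for all $u\in U$ gives $A_u^{L'}=A_u^L$ for all $u\in U$. Transitivity holds because if $A_u^L=A_u^{L'}$ and $A_u^{L'}=A_u^{L''}$ for all $u\in U$, then $A_u^L=A_u^{L''}$ for all $u\in U$.

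Conceptually, the cleanest way to package this is to consider the map $\Phi$ sending a positive-slope line $L$ to the tuple $(A_u^L)_{u\in U}$ in $\prod_{u\in U}\{\,A : \varnothing\neq A\subseteq[n]\,\}$; then $L\sim_U L'$ is exactly the condition $\Phi(L)=\Phi(L')$, so $\sim_U$ is the pullback of equality along $\Phi$, hence an equivalence relation. There is essentially no obstacle here: all the real content is front-loaded into Proposition~\ref{prop:push-properties}.\ref{push:2} (that $\mathrm{push}_L(u)$ lands in a well-defined single face), which is what guarantees that $A_u^L$ is a genuine function of the pair $(L,u)$. The only point requiring a moment's care is making sure ``same open face'' is read as equality of the index sets $A_u^L$ rather than some weaker incidence condition; once that is pinned down, the argument is formal. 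Note also that, although the statement is phrased for $U\subseteq\RR^2$, nothing in the argument uses $n=2$, so it holds verbatim for any non-empty $U\subseteq\RR^n$.
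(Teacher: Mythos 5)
Your proof is correct and follows essentially the same route as the paper: the paper's proof defines $\mathcal{A}_L(U)=\{A^L_u\}_{u\in U}$, invokes Lemma~\ref{lem:facets} to identify $L\sim_U L'$ with $\mathcal{A}_L(U)=\mathcal{A}_{L'}(U)$, and concludes since equality of these families is reflexive, symmetric, and transitive --- exactly your pullback-of-equality argument, with your map $\Phi$ playing the role of $\mathcal{A}_L(U)$. Your observation that nothing depends on $n=2$ is also consistent with the paper, whose statement of the proposition appears to say $\RR^2$ only as an artifact while the surrounding results are stated for $\RR^n$.
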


\begin{proof}
 We can define $\mathcal{A}_L(U)=\{A^L_u\}_{u\in U}$. By Lemma \ref{lem:facets}, $L\sim_U L'$ if and only if $\mathcal{A}_L(U)=\mathcal{A}_{L'}(U)$. Using this equivalent definition of $L\sim_U L'$, it is clear that $\sim_U$ is reflexive, transitive, and symmetric, and therefore an equivalence relation.
\end{proof}


The rank invariant on equivalent lines satisfies the following condition.

\begin{proposition}\label{prop:push-to-line}
If  $u\prec  v\in L$, with $\{u' \in \closure{C} | u'\preceq u\}$  non-empty, and  $L\sim _{\closure{C}}L'$, then it holds that 
$$\rank_\V(u,v)=\rank_\V(\mathrm{push}_{L'}( \bar u),\mathrm{push}_{L'}( \bar v)).$$
\end{proposition}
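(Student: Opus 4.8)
The plan is to reduce both sides to rank invariants evaluated at points of $\overline{C}$ and then identify these points using the hypothesis $L\sim_{\overline{C}}L'$. First I would invoke Theorem \ref{thm:rankinv}: since $\{u' \in \overline{C} \mid u'\preceq u\}$ is non-empty, and hence so is the analogous set for $v$ because $u\prec v$ forces $\bar u\preceq \bar v$, we obtain $\rank_\V(u,v)=\rank_\V(\bar u,\bar v)$ with $\bar u,\bar v\in\overline{C}$. It therefore suffices to prove $\rank_\V(\bar u,\bar v)=\rank_\V(\mathrm{push}_{L'}(\bar u),\mathrm{push}_{L'}(\bar v))$, i.e. to show that pushing the two critical values $\bar u,\bar v$ onto $L'$ leaves the rank invariant unchanged.

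Next I would handle the right-hand side by pushing onto $L'$. Applying Lemma \ref{lem:push-doublebar} to the line $L'$ and the critical values $\bar u\preceq\bar v\in\overline{C}$ gives $\rank_\V(\mathrm{push}_{L'}(\bar u),\mathrm{push}_{L'}(\bar v))=\rank_\V(\overline{\overline{(\bar u)}}^{L'},\overline{\overline{(\bar v)}}^{L'})$. (The proof of that lemma only combines Theorem \ref{thm:rankinv} with Lemma \ref{lem:double-bar}, both of which need only $\preceq$, so the degenerate case $\bar u=\bar v$ causes no trouble; alternatively one may apply Theorem \ref{thm:rankinv} and Lemma \ref{lem:double-bar} directly, using Proposition \ref{prop:push-properties}.\ref{push:6} to see $\mathrm{push}_{L'}(\bar u)\preceq\mathrm{push}_{L'}(\bar v)$.)

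The crux is then to show $\overline{\overline{(\bar u)}}^{L'}=\bar u$, and likewise for $v$. Since $u\in L$, Lemma \ref{lem:bar=barbar} already yields $\overline{\overline{(\bar u)}}^{L}=\bar u$, so it is enough to prove that the double bar of an element of $\overline{C}$ depends on the line only through its equivalence class, that is $\overline{\overline{(\bar u)}}^{L'}=\overline{\overline{(\bar u)}}^{L}$. Unwinding the definition, $\overline{\overline{(\bar u)}}^{L'}$ is the maximum over all $w\in\overline{C}$ with $\bar u\preceq w$ and $S_{L'}(\bar u)\cap S_{L'}(w)\neq\varnothing$. Because both the reference point $\bar u$ and every candidate $w$ lie in $\overline{C}$, the hypothesis $L\sim_{\overline{C}}L'$ together with Lemma \ref{lem:facets} gives $A^{L'}_{\bar u}=A^{L}_{\bar u}$ and $A^{L'}_{w}=A^{L}_{w}$, hence $S_{L'}(\bar u)=S_L(\bar u)$ and $S_{L'}(w)=S_L(w)$. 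The defining set of $\overline{\overline{(\bar u)}}^{L'}$ therefore coincides with that of $\overline{\overline{(\bar u)}}^{L}$, and the two maxima agree.

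Chaining these identities gives $\rank_\V(\mathrm{push}_{L'}(\bar u),\mathrm{push}_{L'}(\bar v))=\rank_\V(\bar u,\bar v)=\rank_\V(u,v)$, which is the claim. I expect the main obstacle to be the bookkeeping in the third step, specifically ensuring that the \emph{reference} face $S_{L'}(\bar u)$, and not merely the candidate faces $S_{L'}(w)$, is controlled by the equivalence relation. This is precisely why it is essential to replace $u$ by $\bar u\in\overline{C}$ via Theorem \ref{thm:rankinv} before pushing along $L'$: the relation $L\sim_{\overline{C}}L'$ constrains the two lines only at points of $\overline{C}$, so Lemma \ref{lem:facets} applies only once every point involved has been moved into $\overline{C}$.
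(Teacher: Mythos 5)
Your proof is correct, and it follows the same overall strategy as the paper: reduce $\rank_\V(u,v)$ to $\rank_\V(\bar u,\bar v)$ via Theorem \ref{thm:rankinv}, then relate this to pushed critical values through the double-bar operation via Lemma \ref{lem:bar=barbar} and Lemma \ref{lem:push-doublebar}. In fact your write-up is more complete than the paper's own. The paper's proof applies Lemma \ref{lem:push-doublebar} with respect to the line $L$ and stops at $\rank_\V(\push_{L}(\bar u),\push_{L}(\bar v))$; it never invokes the hypothesis $L\sim_{\closure{C}}L'$ and never produces a push onto $L'$, so as written it only establishes the statement with $L$ in place of $L'$. Your third step supplies exactly the missing bridge: since the reference point $\bar u$ and every candidate $w$ in the definition of the double bar lie in $\closure{C}$, Lemma \ref{lem:facets} gives $S_{L'}(\bar u)=S_{L}(\bar u)$ and $S_{L'}(w)=S_{L}(w)$, hence $\overline{\overline{(\bar u)}}^{L'}=\overline{\overline{(\bar u)}}^{L}=\bar u$, which is what lets you run Lemma \ref{lem:push-doublebar} along $L'$ rather than $L$ and still land back at $\rank_\V(\bar u,\bar v)$. (The paper could alternatively close this gap using Lemma \ref{lem:pushL=pushL'} or Lemma \ref{lem:bar-push-bar} together with Theorem \ref{thm:rankinv}, but both of those appear only after the proposition.) Your side remark that Lemma \ref{lem:push-doublebar} is stated for strictly comparable values $\bar u\prec\bar v$, while $u\prec v$ only guarantees $\bar u\preceq\bar v$, is also a legitimate point of care: the degenerate case $\bar u=\bar v$ is harmless for the reason you give, but the paper glosses over it entirely.
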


\begin{proof}
Since $u\preceq v$, it follows from Theorem \ref{thm:rankinv} that 
\[\rho_\V (u,v) = \rho_\V(\overline{u}, \overline{v}).\]
As $u,v\in L$,  $\bar u=\overline{\overline {(\bar u)}}^L$ and $\bar v=\overline{\overline {(\bar v)}}^L$ by Lemma \ref{lem:bar=barbar}, implying
\[ \rho_\V(\overline{u}, \overline{v})=\rank_\V\left(\overline{\overline{(\bar{u})}}^L, \overline{\overline{(\bar{v})}}^L\right).\]
As $\overline{u}, \overline{v}\in \closure{C}$, and  $u\preceq v$ implies $\overline{u} \preceq \overline{v}$,  from Lemma \ref{lem:push-doublebar} we get 
 \[  \rank_\V(\overline{\overline{(\bar{u})}}^L, \overline{\overline{(\bar{v})}}^L)=\rho_\V( \push_{L}(\overline{u}),\push_{L}(\overline{v})).
 \] 
\end{proof}

\begin{lemma}\label{lem:pushL=pushL'}
If $L\sim _{\closure{C}}L'$, then $\overline{\mathrm{push}_L( u)}=\overline{\mathrm{push}_{L'}( u)}$ for all  values $u\in\closure C$.
\end{lemma}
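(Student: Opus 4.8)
The plan is to reduce the statement to an equality of double-bars via Lemma~\ref{lem:double-bar}, and then to show those double-bars agree using the characterization of equivalent lines in Lemma~\ref{lem:facets}. Since $u\in\closure C$, Lemma~\ref{lem:double-bar} immediately rewrites both sides as
\[
\overline{\mathrm{push}_L(u)}=\bar{\bar u}^L,\qquad \overline{\mathrm{push}_{L'}(u)}=\bar{\bar u}^{L'},
\]
so it suffices to prove $\bar{\bar u}^L=\bar{\bar u}^{L'}$.

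First I would unwind the definition of the double-bar: by definition,
\[
\bar{\bar u}^L=\max\{u'\in\closure C\mid u\preceq u'\text{ and }S_L(u)\cap S_L(u')\neq\varnothing\},
\]
and similarly for $L'$ with $S_{L'}$ in place of $S_L$. Thus the two double-bars are maxima of subsets of $\closure C$ cut out by the same order condition $u\preceq u'$ but by a priori different incidence conditions involving $S_L$ versus $S_{L'}$. The whole proof therefore comes down to showing that these two defining subsets of $\closure C$ coincide.

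The key step is to invoke Lemma~\ref{lem:facets}. Because $L\sim_{\closure C}L'$, that lemma gives $S_L(w)=S_{L'}(w)$ for every $w\in\closure C$. Here both points appearing in the incidence condition lie in $\closure C$: the base point $u$ is in $\closure C$ by hypothesis, and $u'$ ranges over $\closure C$. Hence $S_L(u)=S_{L'}(u)$ and $S_L(u')=S_{L'}(u')$, so the two conditions $S_L(u)\cap S_L(u')\neq\varnothing$ and $S_{L'}(u)\cap S_{L'}(u')\neq\varnothing$ are literally the same. The two sets whose maxima define $\bar{\bar u}^L$ and $\bar{\bar u}^{L'}$ are therefore equal, their maxima agree, and combining with the first paragraph yields $\overline{\mathrm{push}_L(u)}=\overline{\mathrm{push}_{L'}(u)}$.

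I do not expect a genuine obstacle here; the only point requiring care is that the incidence condition in the double-bar depends on $S_L$ evaluated at two distinct points, so one must check that both of them lie in $\closure C$ before applying Lemma~\ref{lem:facets} --- which is exactly why the hypothesis $u\in\closure C$ (rather than an arbitrary $u\in\RR^n$) is needed.
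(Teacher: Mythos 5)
Your proof is correct and follows essentially the same route as the paper's: apply Lemma~\ref{lem:double-bar} to rewrite both sides as maxima of subsets of $\closure{C}$, then use Lemma~\ref{lem:facets} (with $L\sim_{\closure{C}}L'$) to see that the two defining conditions coincide. Your added remark that both $u$ and $u'$ must lie in $\closure{C}$ for Lemma~\ref{lem:facets} to apply is exactly the point the paper relies on implicitly.
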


\begin{proof}
By Lemma \ref{lem:double-bar}, 
\[
\begin{split}
\overline{ \mathrm{push}_L( u)} &=\max \{u' \in \closure{C} |  u\preceq u'\textrm{ and } S_{L}(u)\cap S_{ L}(u')\neq\varnothing  \},\\
\overline{ \mathrm{push}_{L'}( u)}&=\max \{u' \in \closure{C} |  u\preceq u'\textrm{ and } S_{L'}(u)\cap S_{ L'}(u')\neq\varnothing  \}.
\end{split}
\]
So the claim follows  because  $S_{L}(u)=S_{L'}(u)$ and  $S_{L}(u')=S_{L'}(u')$ by  Lemma \ref{lem:facets}. 
\end{proof}

\begin{lemma}\label{lem:bar-push-bar}
If $L\sim _{\closure{C}}L'$, then $\overline{\mathrm{push}_{L'}( \bar u)}=\bar u$ for all  values $u\in L$.
\end{lemma}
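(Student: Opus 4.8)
The plan is to obtain the identity by chaining three of the lemmas already proved, after making the single key observation that $\bar u \in \closure C$. Indeed, by definition $\bar u = \max\{u' \in \closure C \mid u' \preceq u\}$ is an element of $\closure C$, so the double-bar machinery (which requires its argument to lie in $\closure C$) applies with $\bar u$ in place of the generic point.

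First I would apply Lemma \ref{lem:pushL=pushL'} to the point $\bar u \in \closure C$: since $L \sim_{\closure C} L'$, this gives $\overline{\push_{L'}(\bar u)} = \overline{\push_L(\bar u)}$, transferring the problem from $L'$ to $L$. Next, Lemma \ref{lem:double-bar}, applied with its argument equal to $\bar u$, identifies $\overline{\push_L(\bar u)}$ with the double bar $\overline{\overline{(\bar u)}}^L$. Finally, because $u \in L$, Lemma \ref{lem:bar=barbar} gives $\overline{\overline{(\bar u)}}^L = \bar u$. Concatenating these three equalities yields $\overline{\push_{L'}(\bar u)} = \overline{\push_L(\bar u)} = \overline{\overline{(\bar u)}}^L = \bar u$, as required.

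There is no substantial obstacle here; the content of the lemma lies entirely in assembling the earlier results in the right order. The one point that requires care is that the final step invokes Lemma \ref{lem:bar=barbar} with respect to the line $L$ (not $L'$), which is legitimate precisely because the hypothesis places $u$ on $L$; we never need $u \in L'$. The equivalence $L \sim_{\closure C} L'$ enters only through Lemma \ref{lem:pushL=pushL'}, which lets us replace $L'$ by $L$ after taking a single bar, so that the $L$-based Lemma \ref{lem:bar=barbar} can close the loop.
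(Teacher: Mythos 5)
Your proof is correct and is essentially identical to the paper's own argument, which likewise proves the claim by successively applying Lemmas \ref{lem:pushL=pushL'}, \ref{lem:double-bar}, and \ref{lem:bar=barbar} in exactly the order you use them. You additionally make explicit the details the paper leaves implicit — that $\bar u \in \closure{C}$ so the first two lemmas apply to it, and that Lemma \ref{lem:bar=barbar} is invoked with respect to the line $L$ containing $u$ rather than $L'$ — which is a faithful filling-in of the same argument, not a different route.
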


\begin{proof}
It follows by successively applying Lemmas \ref{lem:pushL=pushL'}, and \ref{lem:double-bar}, \ref{lem:bar=barbar}.
\end{proof}

\begin{lemma}\label{lem:bijection}
If $L\sim _{\closure{C}}L'$, then the correspondence  $\sigma:\push_L(\closure C)\to \push_{L'}(\closure C)$  defined by $\sigma(d)=\push_{L'}(\push_L^{-1}(d))$ for all $d\in \push_L(\closure C)$,  is an order preserving  bijective function. In particular, $\sigma(d)=\push_{L'}(\bar d)$  with  $\bar d=\max\{u\in \closure{C}:u\preceq d\}$ as usual, for all $d\in \push_L(\closure C)$.
\end{lemma}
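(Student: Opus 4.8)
The plan is to first pin down the explicit formula $\sigma(d)=\push_{L'}(\bar d)$, which simultaneously establishes that $\sigma$ is a well-defined function, and then to read off order-preservation and bijectivity from that formula. Throughout, $\bar d$ denotes the single bar $\max\{u\in\closure{C}\mid u\preceq d\}$ of Theorem \ref{thm:rankinv}, which exists for every $d\in\push_L(\closure{C})$ because any $c\in\closure{C}$ with $\push_L(c)=d$ satisfies $c\preceq d$ by Property \ref{prop:push-properties}.\ref{push:3}.

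Two facts will carry most of the work. First, for every $d\in\push_L(\closure{C})$ one has $\push_L(\bar d)=d$: writing $d=\push_L(c)$ with $c\in\closure{C}$, the chain $c\preceq\bar d\preceq d$ together with monotonicity of $\push_L$ (Property \ref{prop:push-properties}.\ref{push:6}) and $\push_L(d)=d$ (Property \ref{prop:push-properties}.\ref{push:3}, since $d\in L$) forces $d\preceq\push_L(\bar d)\preceq d$. The identical argument gives $\push_{L'}(\bar e)=e$ for every $e\in\push_{L'}(\closure{C})$, and hence the assignment $e\mapsto\bar e$ is injective on $\push_{L'}(\closure{C})$: if $\bar e=\bar e'$ then $e=\push_{L'}(\bar e)=\push_{L'}(\bar e')=e'$. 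Second, I will use Lemma \ref{lem:pushL=pushL'}, which under the hypothesis $L\sim_{\closure{C}}L'$ gives $\overline{\push_L(u)}=\overline{\push_{L'}(u)}$ for all $u\in\closure{C}$.

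For well-definedness, fix $d\in\push_L(\closure{C})$ and any $c$ in the fibre $\push_L^{-1}(d)$, i.e. $c\in\closure{C}$ with $\push_L(c)=d$. Applying Lemma \ref{lem:pushL=pushL'} to $u=c$ and to $u=\bar d$ yields $\overline{\push_{L'}(c)}=\overline{\push_L(c)}=\bar d$ and $\overline{\push_{L'}(\bar d)}=\overline{\push_L(\bar d)}=\bar d$, where the last equalities use $\push_L(\bar d)=d$. Thus $\push_{L'}(c)$ and $\push_{L'}(\bar d)$ are two elements of $\push_{L'}(\closure{C})$ with the same bar, hence equal by the injectivity noted above. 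So $\push_{L'}$ is constant on the fibre $\push_L^{-1}(d)$ with value $\push_{L'}(\bar d)$, proving both that $\sigma$ is well defined and that $\sigma(d)=\push_{L'}(\bar d)\in\push_{L'}(\closure{C})$. Order-preservation is then immediate: if $d_1\preceq d_2$, then $\bar{d}_1\preceq\bar{d}_2$ (as $\bar{d}_1$ lies in the set $\{u\in\closure{C}\mid u\preceq d_2\}$ whose maximum is $\bar{d}_2$), and Property \ref{prop:push-properties}.\ref{push:6} gives $\sigma(d_1)=\push_{L'}(\bar{d}_1)\preceq\push_{L'}(\bar{d}_2)=\sigma(d_2)$. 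For bijectivity I will exhibit the inverse: since $\sim_{\closure{C}}$ is symmetric (Proposition \ref{prop:equiv-relation}), the same construction applied to $(L',L)$ produces $\sigma'\colon\push_{L'}(\closure{C})\to\push_L(\closure{C})$, $\sigma'(e)=\push_L(\bar e)$. For $d\in\push_L(\closure{C})$, setting $e=\sigma(d)=\push_{L'}(\bar d)$ gives $\bar e=\overline{\push_{L'}(\bar d)}=\bar d$ by the computation above, so $\sigma'(e)=\push_L(\bar d)=d$; thus $\sigma'\circ\sigma=\id$, and by symmetry $\sigma\circ\sigma'=\id$.

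The only genuinely delicate point is the well-definedness step, namely that the entire fibre $\push_L^{-1}(d)$ collapses to a single point under $\push_{L'}$. I would stress that equality of bars forces equality of the points only because both lie in $\push_{L'}(\closure{C})$, where $e\mapsto\bar e$ is injective; this is precisely where the equivalence $L\sim_{\closure{C}}L'$ enters, via Lemma \ref{lem:pushL=pushL'}. As an alternative to invoking that lemma, the same conclusion can be reached directly from Property \ref{prop:push-properties}.\ref{push:7}(b) and Lemma \ref{lem:facets}: for $c\preceq\bar d$ in $\closure{C}$ with $\push_L(c)=\push_L(\bar d)$ one has $S_{A^L_c}(c)\cap S_{A^L_{\bar d}}(\bar d)\neq\varnothing$, and $L\sim_{\closure{C}}L'$ lets one replace each $A^L$ by the corresponding $A^{L'}$, yielding $\push_{L'}(c)=\push_{L'}(\bar d)$.
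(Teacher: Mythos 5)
Your proof is correct, and its skeleton matches the paper's: both establish the explicit formula $\sigma(d)=\push_{L'}(\bar d)$ by showing that $\push_{L'}$ is constant on each fibre $\push_L^{-1}(d)$, then obtain bijectivity from the symmetric construction and order-preservation from monotonicity of the bar and of $\push_{L'}$ (Proposition \ref{prop:push-properties}.\ref{push:6}). The difference lies in how the constancy on fibres is proved. The paper argues geometrically: by definition of the double bar, $S_L(\bar{\bar c}^L)\cap S_L(c)\neq\varnothing$; Lemma \ref{lem:facets} replaces $S_L$ by $S_{L'}$; Proposition \ref{prop:push-properties}.\ref{push:7}(b) then gives $\push_{L'}(c)=\push_{L'}(\bar{\bar c}^L)$, and Lemma \ref{lem:double-bar} identifies $\bar{\bar c}^L$ with $\bar d$ --- essentially the alternative route you sketch in your closing paragraph. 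You instead stay one level higher: from $\push_L(\bar d)=d$ (your section fact, which the paper also derives, inline in the proof of Proposition \ref{prop:fiber}) and Lemma \ref{lem:pushL=pushL'} you conclude that $\push_{L'}(c)$ and $\push_{L'}(\bar d)$ have the same bar, and the observation that $e\mapsto\bar e$ is injective on $\push_{L'}(\closure{C})$ (because $\push_{L'}(\bar e)=e$ there) forces them to be equal. This is more modular --- the face combinatorics is quarantined inside the already-proved Lemma \ref{lem:pushL=pushL'} --- at the cost of making less visible where the hypothesis $L\sim_{\closure{C}}L'$ does its geometric work, a trade-off you explicitly acknowledge. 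Your treatment of bijectivity is also slightly cleaner: the paper defines $\tau(d')=\push_L(\push_{L'}^{-1}(d'))$ and verifies both compositions by chasing preimages (after separately checking surjectivity, which is redundant once an inverse is exhibited), whereas you compute $\overline{\sigma(d)}=\bar d$ once and read off $\sigma'(\sigma(d))=\push_L(\bar d)=d$ from the section fact, getting the other composition by symmetry of $\sim_{\closure{C}}$.
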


\begin{proof}
Let $d \in \push_L(\closure C)$. Then there exists at least one $c \in \closure C$ such that $d=\push_L(c).$  We first show that, for all $c\in \push_L^{-1}(d)$, 
\begin{eqnarray}\label{eq:sigma}
\push_{L'}(c)=\push_{L'}(\bar d) \ .
\end{eqnarray} 
By definition of double bar, we have that $S_L(\bar{\bar c}^L) \cap S_L(c) \neq \emptyset$. So, as $L \sim_{\closure C} L'$ implies that $S_L(\bar{\bar c}^L) = S_{L'}(\bar{\bar c}^L)$ and $S_L(c) = S_{L'}(c)$ by Lemma \ref{lem:facets}.  This means that 
$$S_{L'}(\bar{\bar c}^L) \cap S_{L'}(c) \neq \emptyset \ .$$
 Therefore, by Proposition \ref{prop:push-properties}.\ref{push:7}(b), we have that $$\push_{L'}(\bar{\bar c}^L)=\push_{L'}(c) \ .$$ Now note that $\bar d = \overline{\push_L(c)} = \bar{\bar c}^L$ by Lemma~\ref{lem:double-bar}, and so 
$$\push_{L'}(\bar d) = \push_{L'}(\bar{\bar c}^L) = \push_{L'}(c) \ .$$

Equality (\ref{eq:sigma}) implies that $\sigma$  is a well defined function because $\bar d$ is unique by 
Lemma~\ref{lem:max} and $\push_{L'}(\bar d)$ is also unique by Property \ref{push:1} of Proposition \ref{prop:push-properties}.

 Now for any $d' \in \push_{L'}(\closure C)$, there exists (at least one) $c' \in \closure C$ such that $d'=\push_{L'}(c')$ and $\sigma(\push_{L}(c')) = \push_{L'}(c')$, showing that $\sigma :\push_L(\closure C)\to \push_{L'}(\closure C)$ is surjective. 
 


We can analogously define a function $\tau: \push_{L'}(\closure C)\to \push_{L}(\closure C)$ by setting $\tau(d')=\push_L(\push_{L'}^{-1}(d'))$ for all $d'\in \push_{L'}(\closure C)$.

Now we prove that $\sigma$ and $\tau$ are bijective by showing that $\sigma$ is the inverse of $\tau$: for all $d'\in \push_{L'}(\closure C)$,
$$\sigma(\tau(d'))=\sigma(\push_L(\push_{L'}^{-1}(d')))=\push_{L'}(\push_{L'}^{-1}(d'))=d'$$
and, similarly, for all $d\in \push_{L}(\closure C)$,
$$\tau(\sigma(d))=\tau(\push_{L'}(\push_{L}^{-1}(d)))=\push_{L}(\push_{L}^{-1}(d))=d \ .$$


Finally, we show that $\sigma$ is order-preserving: Assume that $d,e \in \push_{L}(\closure C)$ with $d \preceq e$. Then $\bar d \in \closure C$ and  $\bar d \preceq d \preceq e$ by definition of bar, and therefore $$\bar d \preceq \bar e = \max \{u' \in \closure{C} | u'\preceq e\} \ .$$ Hence, by Proposition \ref{prop:push-properties}.\ref{push:6}, we have that $$\sigma(d) = \push_{L'}(\bar d) \preceq \push_{L'}(\bar e) = \sigma(e) \ ,$$ as required.  

\end{proof}

\begin{lemma}\label{lem:multi-injection}
Let  $L\sim _{\closure{C}}L'$ be equivalent lines with positive slope. For any $c^i\in \push_L(\closure C)=\{c^1,c^2,\ldots, c^m\}$ increasingly ordered,  let $d^i=\sigma(c^i)$ with  $\sigma$  as in Lemma \ref{lem:bijection}. Then, 
\[
  \mu_\V(d^i,d^j)=\mu_\V(c^i,c^j)
\]
for $c^i\prec c^j$, and 
\[
  \mu_\V(d^i,\infty)=\mu_\V(c^i,\infty).
\]

\end{lemma}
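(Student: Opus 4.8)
The plan is to reduce everything to the rank-invariant formulas of Proposition \ref{prop:fiber} and then match them term by term across the two lines. First I would record that, by Lemma \ref{lem:bijection}, $\sigma$ is an order-preserving bijection from $\push_L(\closure C)$ onto $\push_{L'}(\closure C)$; hence $d^1\prec d^2\prec\cdots\prec d^m$ is precisely the increasing enumeration of $\push_{L'}(\closure C)$, and $d^i=\sigma(c^i)$ for each $i$. Applying Proposition \ref{prop:fiber} to the line $L'$ with this enumeration gives
\[
\mu_\V(d^i,d^j)=\rank_\V(d^i,d^{j-1})-\rank_\V(d^{i-1},d^{j-1})-\rank_\V(d^i,d^j)+\rank_\V(d^{i-1},d^j)
\]
for $d^i\prec d^j$, together with $\mu_\V(d^i,\infty)=\rank_\V(d^i,d^m)-\rank_\V(d^{i-1},d^m)$; the analogous formulas for $L$ express $\mu_\V(c^i,c^j)$ and $\mu_\V(c^i,\infty)$. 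Thus it suffices to prove the single identity $\rank_\V(c^i,c^j)=\rank_\V(d^i,d^j)$ for all $i\le j$, with the convention that rank terms indexed by $c^0$ or $d^0$ are zero, matching the boundary case $u\prec c^1$ in Proposition \ref{prop:fiber}.

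The key step is the identity $\overline{c^i}=\overline{d^i}$ in $\closure C$, where the bar denotes the single-bar of Theorem \ref{thm:rankinv}. Each $c^i$ lies on $L$, so $\overline{c^i}\in\closure C$, and Lemma \ref{lem:bijection} gives $d^i=\sigma(c^i)=\push_{L'}(\overline{c^i})$. Applying Lemma \ref{lem:bar-push-bar} with $u=c^i\in L$ yields $\overline{\push_{L'}(\overline{c^i})}=\overline{c^i}$, that is, $\overline{d^i}=\overline{c^i}$.

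With this in hand I would conclude as follows. Since $c^i\preceq c^j$ and $d^i\preceq d^j$, Theorem \ref{thm:rankinv} gives $\rank_\V(c^i,c^j)=\rank_\V(\overline{c^i},\overline{c^j})$ and $\rank_\V(d^i,d^j)=\rank_\V(\overline{d^i},\overline{d^j})$; the equalities $\overline{c^i}=\overline{d^i}$ and $\overline{c^j}=\overline{d^j}$ then force $\rank_\V(c^i,c^j)=\rank_\V(d^i,d^j)$. Substituting this into each of the four (resp. two) rank terms in the formula for $\mu_\V(d^i,d^j)$ (resp. $\mu_\V(d^i,\infty)$) recovers the corresponding formula for $\mu_\V(c^i,c^j)$ (resp. $\mu_\V(c^i,\infty)$), giving the claim. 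The only point requiring care is the boundary index $i=1$, where $c^0$ and $d^0$ do not exist: both $\rank_\V(c^0,\cdot)$ and $\rank_\V(d^0,\cdot)$ are interpreted as $0$ exactly as in the proof of Proposition \ref{prop:fiber}, so the boundary terms still agree. I do not expect a genuine obstacle, since the essential content is already packaged in Lemma \ref{lem:bar-push-bar}; the only mild risk is bookkeeping in the bar/double-bar notation and in the $i=1$ convention.
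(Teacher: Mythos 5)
Your proposal is correct and follows essentially the same route as the paper: both reduce to the formulas of Proposition \ref{prop:fiber} applied to $L$ and $L'$ (using that $\sigma$ is an order-preserving bijection) and then equate the rank terms via Theorem \ref{thm:rankinv} together with Lemma \ref{lem:bar-push-bar} and the formula $\sigma(c^i)=\push_{L'}(\overline{c^i})$ from Lemma \ref{lem:bijection}. Your intermediate identity $\overline{c^i}=\overline{d^i}$ is just a repackaging of the paper's chain $\rho_\V(c^h,c^k)=\rho_\V(\overline{c^h},\overline{c^k})=\rho_\V(\push_{L'}(\overline{c^h}),\push_{L'}(\overline{c^k}))=\rho_\V(d^h,d^k)$, and your explicit handling of the $i=1$ boundary convention is a harmless (indeed slightly more careful) addition.
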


\begin{proof}
Assuming $c^i\prec c^j\in \push_L(\closure C)$,  by Proposition \ref{prop:fiber}(i),  \[
\mu_\V(c^i,c^j)=\rank_\V(c^i,c^{j-1})-\rank_\V(c^{i-1},c^{j-1})-\rank_\V(c^i,c^{j})+\rank_\V(c^{i-1},c^{j})
\]
 On the other hand, setting $d^h=\sigma(c^h)$ for  $h\in\{i,i-1\}$, and $d^k=\sigma(c^k)$ for  $k\in\{j,j-1\}$ yield
\[
\begin{split}
\rho_\V(c^h,c^k)&=\rho_\V(\overline{c^h},\overline{c^k}) \quad \mbox{(by Theorem \ref{thm:rankinv} )}\\
&=\rho_\V(\overline{\push_{L'}(\overline{c^h})},\overline{\push_{L'}(\overline{c^k})}) \quad \mbox{(by Lemma \ref{lem:bar-push-bar})}\\
&=\rho_\V(\push_{L'}(\overline{c^h}),\push_{L'}(\overline{c^k})) \quad \mbox{(by Theorem \ref{thm:rankinv})}\\
&=\rho_\V(d^h,d^k) \quad \mbox{(by definition).}\\
\end{split}
\]
Therefore,
$$\mu_\V(c^i,c^j)=\rank_\V(d^i,d^{j-1})-\rank_\V(d^{i-1},d^{j-1})-\rank_\V(d^i,d^{j})+\rank_\V(d^{i-1},d^{j})=\mu_\V(d^i,d^j)$$
with the second equality holding by Proposition \ref{prop:fiber}(i)  applied to $L'$.

Analogously,  by Proposition \ref{prop:fiber}(ii), we can  see that
$$\mu_\V(c^i,\infty)=\rank_\V(c^i,c^m)-\rank_\V(c^{i-1},c^m)=\rank_\V(d^i,d^m)-\rank_\V(d^{i-1},d^m)=\mu_\V(d^i,\infty).$$
\end{proof}

\begin{theorem}\label{thm:representative-dgm}
Let  $L\sim _{\closure{C}}L'$ be equivalent lines with positive slope,  parametrized by  $L: u=\vec m s+u_0$ and $L': u=\vec m' s'+u'_0$, respectively. Let  $\mathrm{dgm}(\V_L)$ and $\mathrm{dgm}(\V_{L'})$ be the persistence diagrams of the restrictions of $\V$ to $L$ and $L'$, respectively.
Then, there exists a multi-bijection (that is, a bijection between sets of points with multiplicities),  $\gamma: \mathrm{dgm}(\V_L)\to \mathrm{dgm}(\V_{L'})$ such that, for all $(s,t)\in \mathrm{dgm}(\V_{L})$,
$$ \gamma(s,t)=(s',t')\in \mathrm{dgm}(\V_{L'})$$ 
with
\begin{itemize}
\item $s'\in\RR$ such that $\vec m' s'+u'_0=\mathrm{push}_{L'}(\overline{\vec m s+u_0})$, and
\item $t'\in\RR$ such that $\vec m' t'+u'_0=\mathrm{push}_{L'}(\overline{\vec m t+u_0})$ if $t \in \RR$, while $t'=\infty$ if $t=\infty$.
\end{itemize}
\end{theorem}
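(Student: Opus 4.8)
The plan is to assemble the two preceding lemmas into the desired multi-bijection, after first pinning down what the two persistence diagrams actually contain. By the identity $\rho_\V(u,v)=\rho_{\V_L}(s,t)$ for $u=\vec m s+u_0$, $v=\vec m t+u_0$ recorded before this subsection, together with Proposition \ref{prop:fiber}, a finite point $(s,t)$ of $\mathrm{dgm}(\V_L)$ has positive multiplicity only if both $\vec m s+u_0=c^i$ and $\vec m t+u_0=c^j$ lie in $\push_L(\closure C)$, and a point $(s,\infty)$ only if $\vec m s+u_0=c^i\in\push_L(\closure C)$. Thus $\mathrm{dgm}(\V_L)$ is supported on the finite pairs $c^i\prec c^j$ in $\push_L(\closure C)$ and on the points at infinity indexed by $\push_L(\closure C)$; the same description applies to $\mathrm{dgm}(\V_{L'})$ with $\push_{L'}(\closure C)$.

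First I would take the order-preserving bijection $\sigma\colon\push_L(\closure C)\to\push_{L'}(\closure C)$ from Lemma \ref{lem:bijection}, which satisfies $\sigma(d)=\push_{L'}(\bar d)$. It induces a correspondence on finite index pairs by $(c^i,c^j)\mapsto(\sigma(c^i),\sigma(c^j))$ and on infinite indices by $c^i\mapsto\sigma(c^i)$. Because $\sigma$ is an order-preserving bijection and any two distinct points of a positive-slope line are strictly comparable, $c^i\prec c^j$ forces $\sigma(c^i)\prec\sigma(c^j)$, so finite points are carried to finite points. Writing $d^i=\sigma(c^i)$, I would define $\gamma$ to be exactly this induced correspondence.

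Next I would verify that $\gamma$ is given by the formula in the statement. If $(s,t)\in\mathrm{dgm}(\V_L)$ is a finite point with $u=\vec m s+u_0=c^i\in\push_L(\closure C)$, then $\bar u=\overline{c^i}$, and the explicit form of $\sigma$ in Lemma \ref{lem:bijection} gives $\push_{L'}(\bar u)=\push_{L'}(\overline{c^i})=\sigma(c^i)=d^i$. Since $\push_{L'}(\bar u)$ is a point of $L'$, it determines a unique $s'$ with $\vec m' s'+u'_0=\push_{L'}(\bar u)$, which is precisely the prescribed $s'$; the identical computation applies to $t$, while the case $t=\infty$ is sent to $\infty$ by construction. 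Hence the combinatorial map built from $\sigma$ coincides with the parametrization-free description in the theorem.

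Finally I would show that $\gamma$ preserves multiplicities, which is exactly the content of Lemma \ref{lem:multi-injection}: $\mu_\V(c^i,c^j)=\mu_\V(d^i,d^j)$ and $\mu_\V(c^i,\infty)=\mu_\V(d^i,\infty)$. As $\sigma$ is a bijection, every point of $\mathrm{dgm}(\V_{L'})$ is the image of a unique point of $\mathrm{dgm}(\V_L)$ carrying the same multiplicity, so $\gamma$ is a multi-bijection. The only genuinely delicate steps are bookkeeping: confirming that the two diagrams are supported exactly on $\push_L(\closure C)$ and $\push_{L'}(\closure C)$ so that $\sigma$ indexes everything, and verifying the strictness $s'<t'$. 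All of the analytic content has already been absorbed into Proposition \ref{prop:fiber} and Lemmas \ref{lem:bijection} and \ref{lem:multi-injection}, so the remaining work is to repackage these into a single statement about diagrams in line coordinates.
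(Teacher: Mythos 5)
Your proposal is correct and follows essentially the same route as the paper's proof: both localize the support of the diagrams via Proposition \ref{prop:fiber}, transport points with the order-preserving bijection $\sigma$ of Lemma \ref{lem:bijection}, preserve multiplicities via Lemma \ref{lem:multi-injection}, and translate back to line coordinates through the parametrizations. If anything, you are slightly more careful than the paper in checking that order preservation forces $s'<t'$ and that bijectivity of $\sigma$ yields the multi-bijection on diagrams, points the paper treats as immediate.
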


\begin{proof}
For $(s,t)\in\mathrm{dgm}(\V_L)$ with $s<t<\infty$, we have $\mu_{\V_L}(s,t)=\mu_\V(u,v)$ with $u=\vec m s+u_0$ and $v=\vec m s+u_0$ in $L$. By Proposition \ref{prop:fiber}, we can see that $(u,v)$ will be of the form $(c^i,c^j)$ where $c^i\prec c^j\in\push_L(\closure C)$. 
By Lemma \ref{lem:multi-injection}, the bijection $\sigma: \push_L(\closure C)\to \push_{L'}(\closure C)$ such that $\sigma(c^i)=d^i$ with $d^i=\push_{L'}(\overline{c^i})$ for $1\le i\le m$, satisfies $\mu_\V(c^i,c^j)=\mu_\V(d^i,d^j)$ and $\mu_\V(c^i,\infty)=\mu_\V(d^i,\infty)$.
The parametrization of $L'$  uniquely determines values $s'<t'\in\RR$ such that $d^i=\vec m' s'+u'_0$ and $d^j=\vec m' t'+u'_0$. Since
$$\mu_{\V_{L'}}(s',t')=\mu_{\V}(d^i,d^j)= \mu_\V(c^i,c^j) = \mu_{\V_L}(s,t)$$
and
$$\mu_{\V_{L'}}(s',\infty)=\mu_{\V}(d^i,\infty)= \mu_\V(c^i,\infty) = \mu_{\V_L}(s,\infty)$$
we see that $\sigma$ induces the desired multi-bijection 
\[
\begin{split}
\gamma:\mathrm{dgm}(\V_L)&\to\mathrm{dgm}(\V_{L'})\\
\qquad (s,t)&\to (s',t')
\end{split}
\]
\end{proof}

\section{Conclusions and discussion}
\label{sec:conclusion}
Based on the results of this paper, we can derive a method to fiber the rank invariant of multi-parameter persistence along lines of positive slope chosen by a user. This way, the persistence space of an $n$-parameter persistence module is sliced into persistence diagrams. 

The method consists of an offline preprocessing step, where we compute the representative lines and their persistence diagrams, and an interactive step where we can compute the rank invariant for any chosen line in real time. 
Starting from a discrete gradient vector field $\VV$ consistent with the multi-filtration at hand as input data, the offline step requires:

\begin{itemize}
\item computing the set $ C$ of entrance values of the critical cells of  $\VV$,
\item taking the closure $\closure C$ of $C$ with respect to their least upper bound,
\item partitioning the set of lines with positive slope by the equivalence relation $\sim_{\closure C}$ and picking a representative line from each equivalence class (for example, following the procedure shown in Appendix~\ref{app:cutsandpairs}).
\item storing the persistence diagrams of the  restriction of filtration to each representative line.   
\end{itemize}

Having pre-computed these data, the interactive part 
\begin{itemize}
\item takes as input from the user a line $L$ with positive slope,
\item detects the representative line $L_0$ of its equivalence class with respect to $\sim_{\closure C}$, 
\item computes the persistence diagram relative to $L$ by pushing onto $L$ the bar of the persistence diagram relative to $L_0$. 
\end{itemize}
The correctness of the method is guaranteed by Theorem \ref{thm:representative-dgm}. 

The method requires additional routines from computational geometry in order to efficiently detect representatives of equivalence classes of lines, computing the bars of points, and pushing points onto lines. In Appendix \ref{app:cutsandpairs}, we propose a method applied to $2$-parameter persistence modules to find representative lines for the equivalence classes defined in Section \ref{sec:computing_pers_space}. The method is based on a bijection between segments linking points of $\overline{C}$ and lines cutting $\overline{C}$ in two non-empty subsets.

Moreover, the method requires routines for the persistence diagram computation such as those implemented in \cite{GUDHI}, \cite{PHAT}, or \cite{TTK}. It is worth noticing that, based on the tests using \cite{PHAT} presented in \cite{Scaramuccia-et-al-2020},
it is more efficient to compute persistence diagrams of the persistence module restricted to lines starting with the Morse complex obtained from $\VV$ rather than directly from the original cell complex. In other words, the gradient vector field $\VV$ is used twice: its critical cells allow  us both to determine representative lines, and  to reduce persistence computation

In conclusion, the presented  method allows for:  computational efficiency,  by requiring only linear asymptotic time complexity to obtain the input gradient field, e.g. with the algorithm of \cite{Scaramuccia-et-al-2020}; theoretical improvements, by permitting any number of parameters; data analysis and understanding advantages, by making explicit the correspondence between persistence features and critical cells. For future work we plan to extend the algorithm to a larger number of parameters. 

 The rank invariant fibering along lines is central also in the definition of the {\em matching distance}, a metric on rank invariants of multi-parameter persistence modules \cite{Cerri2016, Cerri-et-al2013} ensuring their stability.   
  In \cite{Kerber-Lesnick-Oudot2018}, the exact computation of the matching distance is achieved for at most two parameters in polynomial runtime in the number of simplices requiring $O( m^{11})$ runtime and $O( m^4 )$ memory, with $m$ denoting the number of simplices.  
  Motivated by the practical need of decreasing the number of operations, and increasing the number of allowed parameters,  our next project will be to  extend equivalence classes of lines to pairs of persistence modules and to apply the method presented here to the matching distance exact computation in any number of parameters.

\section*{Acknowledgements}
This research began at the 2019 Women in Computational Topology (WinCompTop) workshop in Canberra. We thank Ashleigh Thomas and Elizabeth Stephenson for joining us in the initial discussions during that week. The results of this paper have been presented within the Summer 2020 AATRN Seminars.
We thank Anand Deopurkar, Anthony Licata, and Nicholas Proudfoot for helpful conversations related to Appendix~\ref{app:cutsandpairs}.

\bibliography{rank_invariant}
\bibliographystyle{plain}  

\renewcommand{\appendixpage}{{\raggedright \textbf{\huge Appendix}}}

\appendixpage
\begin{appendices}
\section{Enumerating equivalence classes of lines}\label{app:cutsandpairs}
In this appendix we describe an algorithm to enumerate the equivalence classes of lines with positive slope with respect to a set $\closure{C}$ in $\RR^2$. The results of this section will be initially presented in a slightly more general form.

Consider a set \(P\) of \(n\) points in the real plane, not necessarily in general position.
We say that  any division of \(P\) into two non-empty subsets via a line not passing through any point of \(P\) is a \emph{cut} in the plane.
Note that a cut determines an equivalence class of lines if and only if the cut can be realised by a line of positive slope.

\subsection{Cuts are determined by primitive pairs}
A key observation is that we can associate a certain pair of points in \(P\) to any given cut.
We say that a pair of distinct points in \(P\) is \emph{primitive} if no other point of \(P\) lies in the interior of the segment joining the two points.
\begin{proposition}
  There is a bijection between the primitive pairs of points in \(P\) and the cuts of \(P\).
\end{proposition}

\begin{proof}
  We give an explicit bijection, as follows: given a primitive pair of points in \(P\), we can rotate the line joining the two points clockwise by a small amount, around the midpoint of the segment.
  It is possible to choose a small enough angle so that this rotated line does not pass through any point of \(P\).
  Moreover, the two original points lie on opposite sides of this line, and so this line defines a cut.

  Let us now prove that to every cut we can associate a pair of points as above.
  Let \(\{A,B\}\) be a cut, where \(A\) and \(B\) are non-empty disjoint subsets of \(P\) with \(P = A \cup B\).
  We can represent this cut by a dividing line \(L\).

Now fix a direction \(\vec{v}\) orthogonal to \(L\).
    Translate \(L\) along \(\vec{v}\) until it hits at least one point of \(P\) for the first time.
    Let \(H\) be the set of points that \(L\) hits.
    If \(H\) has only one element, set \(a\) to be the unique element of \(H\). If \(H\) has more than one element, then set \(a\) to be the one with the following property: for any other point \(a_1 \in H\), the dot product of the vector \(\vec{v}\) with the vector \(\overrightarrow{aa_1}\) is positive. In other words, if we consider \(\vec{v}\) to be the positive direction along the \(x\)-axis, then \(a\) is the point with the minimum \(y\)-coordinate (see Figure \ref{fig:center_rotation}).

\begin{figure}[t]
\begin{center}
\begin{tikzpicture}
\input{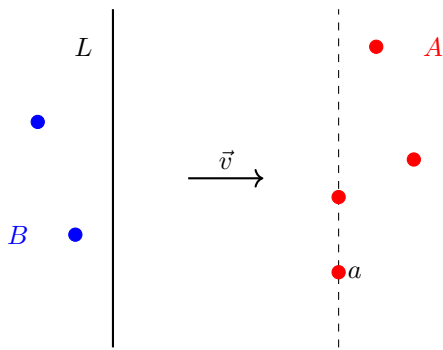}
\end{tikzpicture}
\caption{Choosing the initial center of rotation $a$} \label{fig:center_rotation}
\end{center}
\end{figure}
  Note that until the line hits \(a\), it still defines the same cut.
  Without loss of generality, assume that \(a \in A\).
  Taking \(a\) to be the centre of rotation, rotate the line \(L\) counter-clockwise until it hits another point of \(P\).  Note that the line defines a cut of all points of \(P\) other than those on this line, and it is the same cut except for these points.

 If the line simultaneously hits multiple points, then exactly one of the following is true of these points: they all points lie in \(A\), they all lie in \(B\), or they include both points in \(A\) and in \(B\). In the third case, \(a\) lies between the rest of the points in \(A\) and those in \(B\) on the line. 
  Now we have an algorithm to generate the primitive pair $a,b$ for the cut $L$, as follows.  
  \begin{enumerate}
  \item If at least one point hit lies in \(B\), set \(b\) to be the point of \(B\) hit by the line that is closest to \(a\), as shown in Figure \ref{fig:step1-1b} (Left).
 
    In this case, rotating the line \(ab\) clockwise around the midpoint of the segment \(ab\) gets us back the original cut.
    This is because the line yields the original cut excluding the points of \(P\) that lie on it, and therefore rotating it slightly clockwise restores these points to the correct subsets (either \(A\) or \(B\)).
    We are done.
  \item If all points hit lie in \(A\), reset the center of rotation to be the point of \(A\) hit by the line that is farthest away from \(a\) as in Figure \ref{fig:step1-1b} (Right). Rename this point as \(a\). Continue to rotate the line counter-clockwise around the new centre of rotation until it hits another point of \(P\), and repeat the above steps until the algorithm terminates.
  \end{enumerate}

 \begin{figure}[t]
\begin{center}
\begin{tikzpicture}
\input{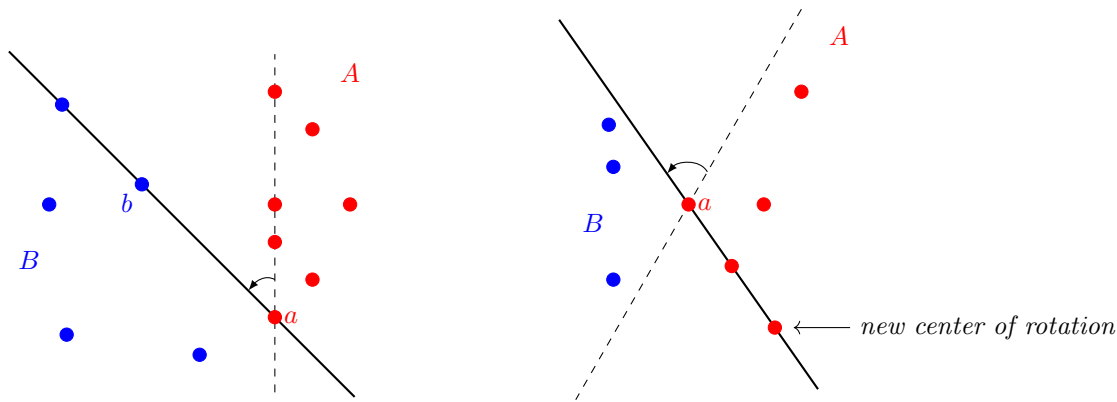} 
\end{tikzpicture}
\caption{Left: Rotating $L$ around $a$ will hit at least one point in $B$. Right: Rotating $L$ around $a$ hits only  points  in $A$.} \label{fig:step1-1b} 
\end{center}
\end{figure}

  The algorithm terminates when the rotating line hits a point of \(B\).
  This always happens, for the following reason.
  As the line makes a full rotation around the convex hull of \(A\), it sweeps through the entire plane except for the convex hull of \(A\), while \(B\) is a non-empty set outside the convex hull of \(A\).
\end{proof}

\begin{remark}\label{rem:ccw-algorithm}
  There is a similar, counter-clockwise, bijection between cuts and primitive pairs: rotate a line segment joining a primitive pair by a small amount counter-clockwise.
  Thus each cut corresponds to a ``clockwise primitive pair'', and another ``counter-clockwise primitive pair''.
  These pairs are distinct unless all points of \(P\) lie on a single line.
  Correspondingly, there is a counter-clockwise version of the algorithm explained in the previous proposition (see Figure \ref{fig:primitive}).
  We use this fact in the next section.
\end{remark}

\subsection{Achieving positive slope}\label{subsect:positiveslope}
We have determined that every primitive pair of points in \(P\) determines a cut by rotating the line segment through this pair slightly clockwise about the midpoint of the segment.
This is the \emph{clockwise primitive pair} associated to this cut.
Similarly, every primitive pair of points determines another cut by rotating the line segment through this pair slightly counter-clockwise around the midpoint of the segment.
This is the \emph{counter-clockwise primitive pair} associated to this cut.
These are shown in Figure~\ref{fig:primitive}.
\begin{figure}[h]
\begin{center}
\begin{tikzpicture}
\input{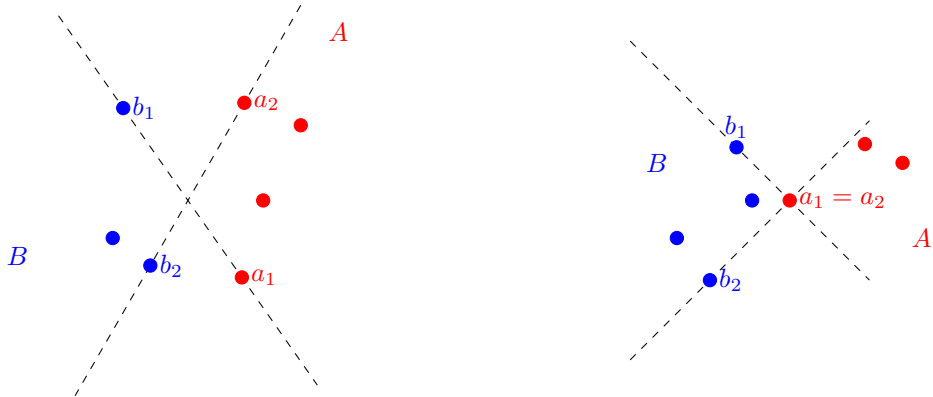}
\end{tikzpicture}
\caption{The clockwise and counter-clockwise primitive pairs associated to the  cut $\{A,B\}$. The pair $(a_1,b_1)$ is the clockwise primitive pair and $(a_2,b_2)$ is the counter-clockwise primitive pair. } \label{fig:primitive}
\end{center}
\end{figure}
The clockwise and counter-clockwise primitive pairs associated to any cut can be found by the algorithm in the previous section, and its variant explained in Remark~\ref{rem:ccw-algorithm} respectively.
We now tackle the problem of determining whether such a cut can be defined by a line of positive slope.
The answer is given by Algorithm~\ref{alg:pos-slope}.
\begin{algorithm}[h]
  \caption{An algorithm to determine whether a cut can be obtained by a line of positive slope.}\label{alg:pos-slope}
  \begin{algorithmic}[1]
    \STATE Let \((a_1,b_1)\) and \((a_2,b_2)\) be the clockwise and anti-clockwise primitive pairs respectively for a cut \(\{A,B\}\).
    \STATE Let \(m_1\) and \(m_2\) be the slopes of the lines \(a_1b_1\) and \(a_2b_2\) respectively.
    \IF{\(a_1 = b_1\) and \(a_2 = b_2\)}
    \RETURN \TRUE
    \ENDIF
    \IF{\(0 < m_1 \leq \infty\)}
    \RETURN \TRUE
    \ELSIF{\(0 \leq m_2 < \infty\)}
    \RETURN \TRUE
    \ELSE
    \STATE We have $-\infty<m_1\leq 0$ and $-\infty\leq m_2<0$.
    \IF{$m_1<m_2$}
    \RETURN \TRUE
    \ELSE
    \RETURN \FALSE
    \ENDIF
    \ENDIF
  \end{algorithmic}
\end{algorithm}

We will need some preparatory lemmas to prove the correctness of the algorithm.
We use the same setup for all of these lemmas, as follows.
Let \((a_1,b_1)\) and \((a_2,b_2)\) be the clockwise and anti-clockwise primitive pairs respectively for a cut \(\{A,B\}\).
It may be the case that either \(a_1 = a_2\) or \(b_1 = b_2\), but we suppose that not both are true.
This supposition is true unless all points in $P$ lie on a line.

Let \(p\) be the intersection point of the segments \(a_1b_1\) and \(a_2b_2\).
Note that \(p\) must exist, for the following reason.
If all four points are distinct, then \(a_2\) and \(b_2\) lie on opposite sides of the line \(a_1b_1\), and so the segments intersect somewhere in their interiors.
Otherwise, if \(a_1 = a_2\) (resp. \(b_1 = b_2\)), then \(p = a_1 = a_2\) (resp. \(p = b_1 = b_2\)).
\begin{lemma}\label{lem:line-sweep-quad}
  Suppose that the four points \(a_1, a_2, b_1, b_2\) are distinct.
  If we start at the line \(a_1b_1\) and rotate clockwise around \(p\) until we hit \(a_2b_2\), every intermediate line defines the cut \(\{A,B\}\).
\end{lemma}
\begin{proof}
  Let \(L_1\) be the line \(a_1b_1\) and \(L_2\) be the line \(a_2b_2\).
  Note that each of the two lines has a well-defined ``$A$'' side and a well-defined ``$B$'' side: the side of the line on which the remaining points of \(A\) lie is the ``$A$'' side, and the side on which the remaining points of \(B\) lie is the ``$B$'' side.

  The lines \(L_1\) and \(L_2\) cut up the plane into four open cones.
  We can label these cones as \(C_{AA}, C_{AB}, C_{BB}, C_{BA}\), where for example \(C_{BA}\) is the intersection of the ``$B$'' side of $L_1$ with the ``$A$'' side of $L_2$.
  The cones \(C_{AB}\) and \(C_{BA}\) contain no points of \(P\).
  This is precisely because these cones lie on the ``$A$'' side of one of the lines and on the ``$B$'' side of the other.
  Moreover, their closures only intersect at the point \(p\).

  It is clear that any line rotated clockwise around \(p\) starting from \(L_1\) until we hit \(L_2\), excluding \(L_1\) and \(L_2\) itself, lies completely in the set \(C_{AB} \cup C_{BA} \cup \{p\}\) and has $a_1$ on its ``$A$'' side and $b_1$ on its ``$B$'' side.  So any such line continues to define the same cut \(\{A,B\}\).
\end{proof}

\begin{lemma}\label{lem:line-sweep-tri}
  Suppose that \(a_1=a_2\). 
  If we start at the line \(a_1b_1\) and rotate clockwise around \(a_1=a_2\) until we hit \(a_1b_2\) keeping track of the trace of $b_1$ under this rotation, then every intermediate line, except for \(a_1b_2\) itself, defines the cut \(\{A,B\}\) after a sufficiently small clockwise rotation about the midpoint between $a_1$ and the trace of $b_1$. 
\end{lemma}
\begin{proof}
As in the proof of Lemma~\ref{lem:line-sweep-quad}, let \(L_1\) be the line \(a_1b_1\) and \(L_2\) be the line \(a_1b_2\), and notice that $L_1$ and $L_2$ cut up the plane into four open cones \(C_{AA}, C_{AB}, C_{BB}, C_{BA}\). As in the proof of Lemma~\ref{lem:line-sweep-quad}, the cones  \(C_{AB}\) and \(C_{BA}\) contain no points of $P$ and their closures only intersect in the point of intersection of $L_1$ and $L_2$, which is $a_1$ in this case. 

It is clear that any line rotated clockwise around \(a_1\) starting from \(L_1\) until we hit \(L_2\), excluding \(L_1\) and \(L_2\) itself, lies completely in the set \(C_{AB} \cup C_{BA} \cup \{a_1\}\) and has $b_1$ on its ``$B$'' side, so defines the cut \(\{A,B\}\) excluding $a_1$. A sufficiently small clockwise rotation of such a line about the midpoint between $a_1$ and the trace of $b_1$ moves $a$ to its ``$A$'' side without crossing any other points in $P$, so the resulting line indeed defines the cut \(\{A,B\}\).
 
\end{proof}

\begin{lemma}\label{lem:slope-sweep}
  Let \(S\) be the set of possible slopes of lines obtained by starting at the line \(a_1b_1\) and rotating clockwise through \(p\) until we hit the line \(a_2b_2\), excluding the slopes of the lines \(a_1b_1\) and \(a_2b_2\) themselves.
  Then the slope of any line \(L\) that defines the same cut \(\{A,B\}\) lies in \(S\).
\end{lemma}
\begin{proof}
  For the proof of this lemma, the four points \(a_1,a_2,b_1,b_2\) need not all be distinct.
  First note that if \(L\) is any line defining the cut \(\{A,B\}\), then it intersects the interiors of the segments \(a_1b_2\) and \(a_2b_1\).
  This is precisely because \(\{a_1,a_2\}\) and \(\{b_1,b_2\}\) lie on opposite sides of \(L\).

  Now suppose \(L\) is \emph{any} line that intersects the interiors of the segments \(a_1b_2\) and \(a_2b_1\).
  These are two opposite sides of the (possibly degenerate) quadrilateral \(a_1a_2b_1b_2\).
  Therefore \(L\) must also intersect the interiors of the diagonals of this (possibly degenerate) quadrilateral, namely the segments \(a_1b_1\) and \(a_2b_2\).
  In particular, because \(L\) intersects both \(a_1b_1\) and \(a_2b_2\), it cannot have slope equal to either \(m_1\) or \(m_2\).
  
  The set of possible slopes of lines in the plane can be identified with the real projective line, by noting that slopes can lie between \([-\infty, \infty]\) with \(\infty = -\infty\).
  We now have a continuous map
  \begin{equation}\label{eq:segment-map}
    a_1b_2 \times a_2b_1 \to \mathbb{RP}^1,
  \end{equation}
  defined by mapping an ordered pair of points to the slope of the line joining the two points.
  By the previous argument, the image of this map lies in \(\mathbb{RP}^1\setminus \{m_1,m_2\}\), which has two connected components.
  Since the domain is connected, the image of the map must lie in exactly one of the connected components.

  The set \(S\) is precisely one of the two connected components: we start at \(m_1\), rotate clockwise until we hit \(m_2\).
  The other connected component is obtained by rotating counter-clockwise starting at \(m_1\) until we hit \(m_2\).

  We already know by either Lemma~\ref{lem:line-sweep-quad} or Lemma~\ref{lem:line-sweep-tri} (depending on whether or not the four points \(a_1, a_2,b_1,b_2\) are distinct) that there are points in the image of the map in Equation~\ref{eq:segment-map} that lie in \(S\).
  By connectedness, all lines that intersect the interiors of \(a_1b_2\) and \(a_2b_1\) have slopes that lie in \(S\). In particular, all lines that define the same cut \(\{A,B\}\) have slopes that lie in \(S\).
\end{proof}

Now we can prove the correctness of the algorithm.
\begin{proposition}\label{prop:alg-correct}
  Algorithm~\ref{alg:pos-slope} correctly determines whether a cut can be obtained by a line of positive slope.
\end{proposition}
\begin{proof}
  Recall that \((a_1,b_1)\) and \((a_2,b_2)\) are the clockwise and anti-clockwise primitive pairs respectively for a cut \(\{A,B\}\).
  Recall that \(m_1\) and \(m_2\) are the slopes of the lines \(a_1b_1\) and \(a_2b_2\) respectively.
  We treat each step of the algorithm in order.

  First, \(a_1 = a_2\) and \(b_1 = b_2\) if and only if all points of \(P\) lie on a single line.
  In this case it is clearly always possible to achieve any cut by a line of positive slope.
  Now assume that not all points of \(P\) lie on a single line, which implies that either \(a_1 \neq a_2\) or \(b_1 \neq b_2\).
  This is the setting of the previous lemmas.
  
  If \(0 < m_1\leq \infty\), then a small clockwise rotation of the line \(a_1b_1\) has positive slope.
  Since the rotated line determines the desired cut \(\{A,B\}\), we are done.
  Similarly, if \(0 \leq m_2 < \infty \), then a small counter-clockwise rotation of the line \(a_2b_2\) has positive slope.
  Since the rotated line determines the desired cut \(\{A,B\}\), we are done.

  Now suppose that \(-\infty < m_1 \leq 0\) and \(-\infty \leq m_2 < 0\).
  Let \(p\) be the intersection point of \(a_1b_1\) and \(a_2b_2\).
  Let \(S\) be the set of possible slopes of lines obtained by starting at the line \(a_1b_1\) and rotating clockwise through \(p\) until we hit the line \(a_2b_2\).
  
  Suppose first that \(-\infty < m_1 < m_2 < 0\).
  As we sweep clockwise from \(a_1b_1\), we begin at \(m_1\), decrease slope until we hit a vertical line with slope \(-\infty = \infty\), and then decrease again from \(\infty\) until we cross \(0\) down to \(m_2\).
  In particular, at least one of the intermediate lines has positive slope.
  If the points \(a_1,a_2,b_1,b_2\) are all distinct, then by Lemma~\ref{lem:line-sweep-quad} we have a line of positive slope that gives the cut \(\{A,B\}\).
  If two of the four points are equal, then Lemma~\ref{lem:line-sweep-tri} states that a sufficiently small clockwise rotation of one of the intermediate lines (which will also have positive slope) gives the cut \(\{A,B\}\).

  Now suppose that \(m_1 \geq m_2\).
  In this case, the set \(S\) consists only of negative numbers: these are the slopes starting from \(m_1\) and decreasing down to \(m_2\).
  By Lemma~\ref{lem:slope-sweep}, we see that there is no line of positive slope that defines this cut.
\end{proof}

\subsection{Cuts through a fixed point}\label{subsect:c-cuts}
In order to address equivalence classes of lines that pass through a given point $c$ of $\closure C$, we now  say that a division of a non-empty set of points \(P\)   of the plane, with $c\notin P$, into  two disjoint subsets $A$ and $B$, of which at most one can be empty, via a line  passing through the given point $c$ and disjoint from \(P\), is a $c$-\emph{cut} of $P$. Reciprocally, we say that a line through $c$ and a point of \(P\) is a $c$-\emph{primitive line} of $P$.

\begin{proposition}
  There is a bijection between the $c$-primitive lines of  \(P\) and the $c$-cuts of \(P\).
\end{proposition}

\begin{proof}
 We construct an explicit bijection, called the \emph{clockwise bijection}, as follows.  Given a $c$-primitive line of \(P\), we can rotate this line clockwise by a small amount, around $c$, so that this rotated line does not pass through any point of \(P\). This line defines a $c$-cut.
Vice versa, with every $c$-cut of $P$ we can associate a $c$-primitive line by rotating the line realing the $c$-cut counter-clockwise until it hits some point of $P$, which exists because $P$ is non-empty. Note that by a completely symmetric argument we also have a \emph{counter-clockwise bijection}.
\end{proof}

We have determined that any $c$-cut $L$ of $P$ can be determined by rotating  both  a $c$-primitive line  $L_1$ clockwise and a $c$-primitive line  $L_2$ counter-clockwise. Let $m_1$ and $m_2$ be the slopes of $L_1$ and $L_2$, respectively. In the case $m_1=m_2$, because $L_1$ and $L_2$ both pass though $c$, we have $L_1=L_2$. In this case all points of $P$ belong to $L_1$ and positive slope can be achieved by rotation around $c$ for every value of $m_1$.

In the case when $m_1\ne m_2$, Algorithm \ref{alg:pos-slope} applied to $c$-primitive lines instead of primitive pairs of points achieves the goal. Indeed, again, $L_1$ and $L_2$ have a well defined $A$-side and $B$-side, and $L-\{c\}$ is contained in $C_{AB}\cup C_{BA}$. Then the argument follows as in Proposition \ref{prop:alg-correct}.

\subsection{Retrieving representatives lines}

With reference to the equivalence relation on lines defined by their reciprocal position with respect to the set $\closure C$ of critical values and their least upper bounds  as given in Definition \ref{def:rec-position}, our goal is to retrieve a representative line with positive slope for each possible equivalence class.  Recall from Lemma \ref{lem:facets} that two lines belong to the same equivalence class with respect to $\closure{C}$ if and only if they hit the positive cone of each point of $\closure{C}$ at the same facet. 


There are three possible situations for lines in the same class: (i) There is only one line in the equivalence class passing through two points $c$ and $c'$ of $\closure{C}$; (ii)  the lines in the considered equivalence class  contain exactly one point of $\closure{C}$, say $c$; 
(iii) the lines in the considered equivalence class do not contain any point of $\closure{C}$. 

Case (i) can be easily solved by taking lines through all possible pairs of distinct  points $c$ and $c'$ in $\closure{C}$, provided that $c\preceq c'$, paying attention to not taking the same line multiple times if there are  more than two points on the same line.

Case (ii). In this case, each such line partitions $\closure{C}-\{c\}$ into two subsets $A$ and $B$. For each equivalence class of lines for which $A$ and $B$ are both empty, $c$ is the only point of $\closure c$, so there is only one such equivalence class and any line through $c$ with positive slope is a representative of it. In the case when at least one between $A$ and $B$ are non-empty,  we can obtain a representative line by applying the algorithm presented in Subsection \ref{subsect:positiveslope} as explained in Subsection \ref{subsect:c-cuts}.  Note that since $\closure{C}$ is closed under least upper bound, the case where $-\infty<m_1<m_2<0$ (line 12 in Algorithm \ref{alg:pos-slope}) cannot occur.

Case (iii). In this case, each such line partitions $\closure{C}$ into two subsets $A$ and $B$. For each equivalence class of lines for which $A$ and $B$ are both non-empty, we can obtain a representative by  applying the algorithm presented in Subsection \ref{subsect:positiveslope}. For the case when either $A$ or $B$ is empty, the other one is necessarily equal to $\closure{C}$. There are exactly two such equivalence classes of lines depending on whether the lines hit all the positive cones of points of $\closure{C}$ at their horizontal or vertical facets. As a representative of the first class, we can take a line parallel to the diagonal of $\RR^2$ passing to a point with abscissa greater  than the maximum abscissa of points of $\closure{C}$, and ordinate smaller than the minimum ordinate of points of $\closure{C}$. Symmetrically, as a representative of the second class, we can take a line parallel to the diagonal of $\RR^2$ passing to a point with abscissa smaller  than the minimum abscissa of points of $\closure{C}$, and ordinate greater than the maximum ordinate of points of $\closure{C}$.

\end{appendices}

\end{document}